\newtheorem{lemma}{Lemma}
\newtheorem{theorem}{Theorem}
\theoremstyle{remark}
\newtheorem{remark}{Remark}
\title{Schatten-class Truncated Toeplitz Operators}
\author{Patrick Lopatto\thanks{The first author was supported by the National Science Foundation under grant DMS-1055897.} \and Richard Rochberg\thanks{The second author was supported by the National Science Foundation under grant DMS-1001488.}}
\begin{document}
\maketitle

\section{Introduction}

We let $H^2$ denote the Hardy space $H^2(\mathbb D)$ of the open unit disk $\mathbb D$ in the complex plane. Recall that $H^2$ is the Hilbert space of holomorphic functions which are power series centered at $0$ with square-summable coefficients. As usual, we identify $H^2$ with its space of boundary values, the subspace of $L^2(\partial \mathbb D)$ spanned by $\{1,z, z^2,\dots\}$, where the functions in this set are considered as functions on $ \partial \mathbb D$. We let $P$ denote the Cauchy projection, the orthogonal projection from $L^2(\partial \mathbb D)$ to $H^2$. This projection may be extended in a natural way to a map from $L^1(\partial \mathbb D)$ to $H(\mathbb D)$, the set of holomorphic functions on $\mathbb D$ (see \cite{Sarason2007}). 

A (possibly unbounded) classical Toeplitz operator is defined by starting with a function $\phi\in L^2 (\partial \mathbb D)$, called a symbol function, and compressing the multiplication operator $M_\phi$ to $H^2$. That is, we define $$T_\phi= PM_\phi.$$
It is known that $T_\phi$ is bounded if and only if $\phi\in L^\infty$, and in general it is interesting to relate properties of $T_\phi$ to those of the symbol function $\phi$. Extensive work has been done on classical Toeplitz operators and much is now known about them \cite{Nikolski2002}.

Recently, Sarason has proposed studying compressions of classical Toeplitz operators to coinvariant subspaces of the shift operator on $H^2$ \cite{Sarason2007}. He calls these truncated Toeplitz operators, which we will abbreviate as TTO. Recall that by Beurling's theorem, any coinvariant subspace of the shift operator is of the form 
$$K_u=H^2 \circleddash uH^2$$
for some inner function $u$. Such spaces are also called model spaces, and we denote the orthogonal projection to the model space $K_u$ by $P_u$. Every $K_u$ comes equipped with an anti-unitary conjugation operator $C$, whose properties are discussed in detail in \cite{Sarason2007}. Here we just recall that, given $f\in K_u$, $(Cf)(e^{i\theta})=u(e^{i\theta})\overline{e^{i\theta} f(e^{i\theta})}$. Note that this is an equality of boundary values, not interior values. Also, we will use the same notation and formula for general $f\in L^2$. 

Given a model space $K_u$ and a symbol function $\phi\in L^2$, we define the truncated Toeplitz operator $A_\phi$ on $K_u$ as 
$$A_\phi=P_uM_\phi.$$
As before, the general project is to deduce properties of $A_\phi$ from properties of $\phi$ and vice versa. A TTO does not have a unique symbol. However, every TTO has a unique symbol in $K_u + \overline K_u$  \cite{Sarason2007}, and we will often focus attention on that choice of symbol.

Hankel operators on $H^2$ are closely related to Toeplitz operators. The Hankel operator $H_\phi$ with symbol $\phi\in L^2$ is defined as 
$$H_\phi=(I-P)M_\phi.$$
As before, this may only be densely defined. Unlike a Toeplitz operator, which is an operator from $H^2$ to $H^2$, a Hankel operator is a map from $H^2$ to $(H^2)^\perp$. If we define $P_{\overline u}$ to be the projection onto $\overline{K_u}$, we can define the truncated Hankel operator (THO) $B_\phi$ as 
$$B_\phi=P_{\overline u}H_\phi.$$
This is a map from $K_u$ to $\overline {(K_u)_0}$, the subset of $\overline{K_u}$ that vanishes at $0$. Note that if we assume $\phi \in uH^2+\overline{uH^2}$, then $B_\phi = H_\phi P_u$, since $H_\phi$ already maps into $\overline{K_u}$. 

We note, but will not use, the fact that the discussion of Hankel operators in this context could be recast in the language of Hankel bilinear forms, as in \cite{Janson1987}.

The goal of this paper is to give criteria for TTOs and THOs to be in the Schatten ideals $S_p$. Given a bounded operator $T$ between two (possibly distinct) Hilbert spaces, the singular values $\lambda_i$ are defined as the eigenvalues of the positive operator $\sqrt{T^*T}$. The Schatten $p$-norm of $T$ is then the $l^p$ norm of the sequence of singular values, and $T$ is said to be in $S_p$ if this norm is finite. The special cases $S_1$, the trace class operators, and $S_2$, the Hilbert-Schmidt operators, are well known. In the case of classical Toeplitz operators, it is known that no nonzero $T_\phi$ is in any $S_p$ ideal. The classical $S_p$ Hankel operators have been completely characterized, and this characterization is surveyed in $\cite{Zhu}$. 

We now give an informal overview of this paper's contents. Given a TTO $A_f$, we may write the symbol function $f$ as the sum of a holomorphic function and an anti-holomorphic function: $f=\phi+\overline{\psi}$. In this case, $A_f=A_\phi + A_{\overline{\psi}}=A_\phi+A^*_{{\psi}}$. This decomposition suggests that perhaps it suffices to study operators $A_\gamma$ with $\gamma$ holomorphic.

However, problems arise. First, the symbol $f$ is not unique. This difficulty is overcome by restricting attention to a canonical choice of symbol. As noted previously, Sarason showed that every TTO corresponds to a unique symbol in $K_u + \overline {K_u}$. Even in that case, the splitting into $\phi+\overline{\psi}$ is not unique. That is a minor technical issue which is resolved by requiring that $\psi$ be orthogonal to the projection onto $K_u$ of the constant function.

There is then a more fundamental problem. We are interested in knowing how the size of $A_f$ is related to the smoothness of $f$. However, passing from $A_f$ to the summands $A_\phi$ and $A_\psi^*$ does not always respect this relationship. In \cite{Baranov2010}, the authors show there is a TTO $A_f$ which is a rank one operator, but for no choice of splitting are the summands $A_\phi$ and $A_\psi^*$ bounded. Thus in recombining $A_\phi$ and $A_\psi^*$ to recapture $A_f$, the cancellation between the two terms (and subsequent loss of information) can be the primary effect. 

Our first set of results concern TTOs $A_\phi$ with holomorphic symbol $\phi$. In that case, we use Lemma 1 below to recast questions about the Schatten class membership of $A_\phi$ as questions about classical Hankel operators on the Hardy space. Using the classical Hardy space theory we then obtain in Theorem 1 conditions for $A_\phi$ to be in a Schatten class in terms of the membership of a transform of $\phi$ in a Besov space.

Even if a TTO has a holomorphic symbol, the symbol is not uniquely determined. However, if the model space is generated by a Blaschke product with zero set $Z$, then the values of the analytic symbol on $Z$ are uniquely determined. In Theorem 3 we show that if $Z$ is an interpolating sequence, then the summability properties of those values determine the Schatten class properties of the corresponding TTO. 

The next set of results involves more restricted situations in which we can make progress by studying operators without assuming the symbol is analytic. We study model spaces generated by Blaschke products associated with thin sequences, model spaces generated by certain types of singular inner functions, and operators associated with a class of very smooth symbols.

In the final section we use Hilbert space techniques to give a characterization of TTOs in the Hilbert-Schmidt class. We work with an equivalent formulation involving truncated Hankel operators. The condition obtained is in the spirit of the classical characterization of Hilbert-Schmidt Hankel operators by membership of their symbol in the Dirichlet space. 

We note that similar questions have been addressed from a different perspective by R.V. Bessonov in \cite{Bessonov2014}.

The first author would like to thank John E. McCarthy for his many helpful suggestions. 

\section{Using Classical Results}

In this section, we exploit the connection between TTOs and classical Hankel operators to give a complete description of the $S_p$ TTOs with holomorphic symbol in terms of a Besov space condition. 

\begin{lemma}
If $\phi \in K_u$ and $A_\phi$ is bounded, then $A_\phi = U(B_{\overline{C\phi}} + R)$, where $U$ is a unitary operator independent of $\phi$ and $R$ is a bounded rank one operator. 
\end{lemma}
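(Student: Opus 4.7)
The plan is to identify a canonical unitary $U : \overline{K_u} \to K_u$ and then compute $B_{\overline{C\phi}}(f)$ directly, tracking the constant Fourier coefficient that gives rise to the rank-one discrepancy. For the unitary: from the formula $Cg = u\bar z\bar g$ for $g\in K_u$, conjugating yields $\overline{Cg} = \bar u z g \in \overline{K_u}$, so since $|u\bar z|=1$ on $\partial\mathbb D$ and $C$ is an antiunitary bijection of $K_u$, multiplication by $u\bar z$ is an isometric bijection $\overline{K_u}\to K_u$. Set $U := M_{u\bar z}\big|_{\overline{K_u}}$, which depends only on $u$. Applying the same formula to $\phi$ also gives $\overline{C\phi} = \bar u z\phi$.

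Next I would compute $B_{\overline{C\phi}}(f)$ for $f\in K_u$ directly. Using the Hilbert-space decomposition $H^2 = K_u \oplus uH^2$, write $\phi f = A_\phi f + uk$ for some $k\in H^2$, so that $\bar u z\phi f = \bar u z A_\phi f + zk$. The analytic summand $zk$ is killed by $I-P$, leaving $(I-P)(\bar u z\phi f) = (I-P)(\bar u z A_\phi f)$. But $\bar u z A_\phi f = \overline{C(A_\phi f)}$ lies in $\overline{K_u}\subset\overline{H^2}$, and the decomposition $\overline{H^2} = \mathbb C\oplus(H^2)^\perp$ shows that $I-P$ merely removes the constant Fourier coefficient $c := \langle A_\phi f, u\bar z\rangle$. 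Since $\bar u z A_\phi f$ is already in $\overline{K_u}$, which $P_{\bar u}$ fixes, this gives
$$B_{\overline{C\phi}}(f) = \bar u z A_\phi f - c\,P_{\bar u}(1).$$

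Applying $U$ to both sides yields $UB_{\overline{C\phi}}(f) = A_\phi f - c\,u\bar z P_{\bar u}(1)$, so $A_\phi - UB_{\overline{C\phi}}$ is the rank-one operator $f\mapsto\langle A_\phi f, u\bar z\rangle\cdot u\bar z P_{\bar u}(1)$, bounded by $\|A_\phi\|\,\|P_{\bar u}(1)\|$. Setting $R := U^{-1}(A_\phi - UB_{\overline{C\phi}})$ and using $U^{-1} = M_{\bar u z}$ gives $Rf = \langle A_\phi f, u\bar z\rangle\,P_{\bar u}(1)$, visibly rank one and bounded, producing the factorization $A_\phi = U(B_{\overline{C\phi}} + R)$. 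The main technical obstacle is regularity: $\phi,f\in K_u\subset L^2$ yields only $\phi f\in L^1$, so the decomposition $\phi f = A_\phi f + uk$ and the various projections must be read via the extension of $P$ to $L^1\to H(\mathbb D)$ noted in the introduction; alternatively, one proves the identity first for $\phi\in K_u\cap L^\infty$ and extends to all bounded $A_\phi$ by density.
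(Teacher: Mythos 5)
Your proposal is correct and takes essentially the same route as the paper: both rest on the unitary $U=M_{u\bar z}$, the identity $\overline{C\phi}=\bar u z\phi$, the fact that the $uH^2$ (analytic, $H^1$) part of $\phi f$ is annihilated by $I-P$, and the observation that what remains differs from $\bar u z A_\phi f$ only in the constant Fourier coefficient, which supplies the rank-one term. The only cosmetic differences are that the paper reaches the computation via Sarason's identity $CA_{\overline\phi}C=A_\phi$ while you compute $B_{\overline{C\phi}}$ directly from $H^2=K_u\oplus uH^2$, and your explicit $Rf=\langle f,C\phi\rangle\,P_{\overline u}(1)$ tracks the $P_{\overline u}$-correction a bit more precisely than the paper's $Rf=\langle f,C\phi\rangle$; your flagged $L^1$-regularity caveat is handled exactly as in the paper's Remark 1.
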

\begin{proof}
We define $Rf=\langle \overline {C\phi} f, 1 \rangle = \langle f, C\phi \rangle$, which is bounded by the Cauchy-Schwarz inequality. We define $U=M_{u\overline z}$. Note that $Uf = C\overline f$, so $U$ is a unitary operator from $\overline {(K_u)}$ to $K_u$.

From lemma 2.1 of \cite{Sarason2007}, we know that $CA_{\overline{\phi}} C=A_\phi$. Hence 
$$A_\phi f = CA_{\overline{\phi}} C f=CP_{K_u}(u\overline{z\phi f})=CP_{H^2}(u\overline{z\phi f})=u\overline{z P_{H^2}(u\overline{z\phi f})}$$
$$=U(P_{\overline{H^2}}(\overline{u}z\phi f)) = U(P_{\overline{H^2}}(\overline{C\phi} f))=U(B_{\overline{C\phi}} + R)(f) .$$
\end{proof}
\begin{lemma}
Let $\overline{\psi}$ be an anti-holomorphic function with $\psi\in K_u$ and $\langle \psi, P_{K_u} 1 \rangle=0$, and such that $A_{\overline \psi}$ is bounded. Then $A_{\overline \psi} = U(B_{\overline{(z\overline{u\psi})}}+V)$, where $U$ is the unitary operator in Lemma $1$ and $V$ is a rank one operator.
\end{lemma}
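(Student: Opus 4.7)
The plan is to mirror the proof of Lemma~1. Using Sarason's identity $CA_{\bar\phi}C = A_\phi$ with $\phi = \psi$, I would write $A_{\overline\psi} = CA_\psi C$. For $f \in K_u$, and using $Cf = u\bar z\bar f$, this gives
$$A_{\overline\psi} f \;=\; C\,P_{K_u}(u\bar z\psi\bar f).$$
Unlike Lemma~1, the projection $P_{K_u}$ does not collapse to $P_{H^2}$ here, so I would expand $P_{K_u} = P_{H^2} - M_u P_{H^2} M_{\bar u}$ and carry both pieces through the computation.

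Next, I would apply $C$ to each piece, using $Cg = u\bar z\bar g$, $\overline{P_{H^2}g} = P_{\overline{H^2}}\bar g$, and $P_{\overline{H^2}}h = (I-P)h + \hat h(0)$. This should produce an expression of the form $U H_\sigma f$ plus an auxiliary Hankel-type term $\bar z H_{z\bar\psi} f$ and constant-term contributions, where $\sigma = \bar u z\bar\psi = \overline{z\overline{u\psi}}$. The auxiliary term is handled by the Fourier-shift identity
$$\bar z\,H_{z\bar\psi}f \;=\; H_{\bar\psi}f - \beta(f)\bar z,$$
with $\beta(f)$ a scalar read off from Fourier series, which cancels matching pieces elsewhere in the expansion. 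The decomposition $P_{\overline{H^2}} = P_{\overline u} + P_{\overline{uH^2}}$ then extracts the piece $UB_\sigma f$, leaving a candidate rank-one remainder $Vf = \alpha(f)\,\overline{P_{K_u}1}$ for a scalar functional $\alpha(f) = \langle f, u\bar z\psi\rangle$.

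The hypothesis $\langle \psi, P_{K_u}1\rangle = 0$, which is equivalent to $\psi(0) = 0$, enters at this point: it allows writing $\psi = z\tilde\psi$ with $\tilde\psi \in H^2$, so that $u\bar z\psi = u\tilde\psi \in uH^2$, which is orthogonal to $K_u$. This orthogonality controls the functional $\alpha$ on $K_u$ and pins the range of $V$ to the one-dimensional subspace spanned by $\overline{P_{K_u}1}$, giving the claimed rank-one $V$. The main obstacle will be the bookkeeping: correctly tracking how the projections $P_{\overline u}$, $P_{\overline{uH^2}}$, and the constant-term evaluations at $0$ interact with the frequency shifts by $z$ and $\bar z$ and with the various Hankel symbols, and verifying that all leftover pieces consolidate into the single rank-one operator $V$.
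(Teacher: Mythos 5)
Your overall strategy is the paper's: apply Sarason's identity $A_{\overline\psi}=CA_\psi C$ and unwind the conjugation, ending with a truncated Hankel operator plus a rank-one correction. Where you diverge is in the execution. The paper uses the hypothesis at the very start: $\langle\psi,P_{K_u}1\rangle=0$ gives $\psi(0)=0$, so $\psi=z\nu$ and $\psi u\overline{zf}=\nu u\bar f$; since $CP_{K_u}h=u\bar z\,\overline{P_{K_u}h}=UP_{\overline{K_u}}\bar h$, one gets in a single line $A_{\overline\psi}f=UP_{\overline{K_u}}(\overline{\nu u}f)$, and the difference between $P_{\overline{K_u}}M_{\overline{\nu u}}$ and $B_{\overline{\nu u}}=P_{\overline{K_u}}(I-P)M_{\overline{\nu u}}$ is visibly the rank-at-most-one operator $f\mapsto\langle f,\nu u\rangle\,\overline{P_{K_u}1}$. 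Your plan --- expanding $P_{K_u}=P-M_uPM_{\bar u}$, pushing both pieces through $C$, invoking the shift identity for $\bar zH_{z\bar\psi}$, then splitting $P_{\overline{H^2}}=P_{\bar u}+P_{\overline{uH^2}}$ --- recombines to the same thing (because $P_{\overline{K_u}}=P_{\overline{H^2}}-M_{\bar u}P_{\overline{H^2}}M_u$), so it should work, but it is substantially heavier and you leave the crucial consolidation of the leftover terms as ``bookkeeping'' rather than carrying it out; the early substitution $\psi=z\nu$ is precisely what makes all of that machinery unnecessary. Two small corrections: the functional you isolate, $\alpha(f)=\langle f,u\bar z\psi\rangle=\langle f,u\tilde\psi\rangle$, vanishes identically on $K_u$ by the very orthogonality you cite ($u\tilde\psi\in uH^2\perp K_u$), so your remainder is in fact the zero operator --- harmless, and consistent with the lemma's ``$V$ rank one'' read as rank at most one; and your asserted equality $\bar uz\bar\psi=\overline{(z\overline{u\psi})}$ is off by a conjugation: the symbol you actually use, $\bar uz\bar\psi=z\overline{u\psi}=\overline{\nu u}$, is the one the paper's own proof produces (the extra bar in the lemma statement appears to be a typo).
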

\begin{proof}
Note that $\langle \psi, P_{K_u} 1 \rangle=0$ implies $\langle \psi, 1\rangle =0$, so $\psi(0)=0$ and we may write $\psi = z\nu$. Then 
$$A_{\overline \psi} f = C A_\psi Cf = CP_{K_u}(\psi u \overline{zf}) = u\overline{z}\overline{P_{K_u}(\nu u \overline f)}=UP_{\overline K_u}(\overline{\nu u} f )=U(B_{\overline{\nu u}}+V)(f).$$
\end{proof}
\begin{remark}
Lemma 1 shows that to study properties related to the size of $A_\phi$ (finite rank, compact, $S_p$, etc), it suffices to study the properties of $B_{\overline{C \phi}}$. In fact, noting that $\phi$ is analytic, we may study $H_{\overline{C \phi}}$. This is because, for $f\in K_u$, $B_{\overline{f}}$ extends by zero to the rest of $H^2$. If $g\in uH^2$, then $g=ug_1$, and $H_{\overline f} g = (I-P) \overline{f}ug_1$. Though the product  $\overline{f}ug_1$ may not be in $H^2$, being the product of two $L^2$ functions $\overline f$ and $ug_1$, it is in $L^1$. In fact, $\overline{f}u\in H^2$, so $\overline{f}ug_1\in H^1$ and $(I-P)\overline{f}ug_1=0$ as desired, since the Cauchy projection is the identity on $H^1$.  

In other words, if we decompose $H^2$ as $$K_u \oplus uH^2,$$ and if $\phi$ is holomorphic, then $H_{\overline{C\phi}}$ decomposes as $$H_{\overline{C\phi}}=B_{\overline{C\phi}}\oplus 0.$$
\end{remark}
\begin{theorem}
Suppose $\phi$ is analytic and $p\in (0, \infty)$. Then $A_\phi \in S_p$ if and only if $C\phi$ is in the Besov space $B_p$, and these have comparable norms in their respective spaces.
\end{theorem}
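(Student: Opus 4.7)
The plan is to reduce the question entirely to the classical characterization of Schatten-class Hankel operators on $H^2$ via the two observations already in hand: Lemma 1, which presents $A_\phi$ as a unitary image of $B_{\overline{C\phi}}$ plus a rank-one term, and Remark 1, which identifies $B_{\overline{C\phi}}$ with the classical Hankel operator $H_{\overline{C\phi}}$ via a trivial zero extension. Combining these should place $A_\phi \in S_p$ on exactly the same footing as $H_{\overline{C\phi}} \in S_p$, whereupon Peller's theorem finishes the job.

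First I would note that a rank-one operator lies in every $S_p$ (even in the quasi-Banach range $0 < p < 1$), and adding a finite-rank operator perturbs the $S_p$ quasi-norm by a bounded amount. Applying Lemma 1 therefore gives
\[
A_\phi \in S_p \quad \Longleftrightarrow \quad B_{\overline{C\phi}} \in S_p,
\]
with norms comparable up to an additive, and hence multiplicative (once we absorb it into the overall constant), correction. Next, since $\phi \in K_u$, the conjugate $C\phi$ also lies in $K_u$, so $C\phi$ is holomorphic; by Remark 1, $H_{\overline{C\phi}}$ splits as $B_{\overline{C\phi}} \oplus 0$ with respect to the decomposition $H^2 = K_u \oplus uH^2$. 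Consequently $H_{\overline{C\phi}}$ and $B_{\overline{C\phi}}$ have identical singular values, so their $S_p$ (quasi-)norms agree.

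At this point the problem is reduced to a classical Hankel operator with an anti-holomorphic symbol $\overline{C\phi}$. I would invoke Peller's theorem (extended by Semmes and Peller himself to the full range $0 < p < \infty$): $H_g \in S_p$ if and only if the anti-analytic projection of $g$ lies in the analytic Besov space $B_p$, with comparable norms. Since $\overline{C\phi}$ is already anti-holomorphic, its anti-analytic projection is essentially itself, and membership in $B_p$ is unaffected by conjugation (the defining integral $\int_{\mathbb{D}} |f^{(n)}|^p(1-|z|^2)^{np-2}\,dA$ transforms identically under $f \mapsto \bar f$ once one passes between holomorphic and anti-holomorphic Besov spaces). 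Thus $H_{\overline{C\phi}} \in S_p$ iff $C\phi \in B_p$, with two-sided norm comparison.

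The main conceptual obstacle, as I see it, is bookkeeping rather than hard analysis: making sure the Besov space $B_p$ used in the statement matches the space appearing in Peller's theorem, and verifying that the various reductions (dropping the rank-one $R$, identifying $B_{\overline{C\phi}}$ with $H_{\overline{C\phi}}$, and swapping between analytic and anti-analytic Besov spaces) all preserve the quantitative norm equivalence claimed in the theorem. The range $0 < p < 1$ warrants a small additional check since $S_p$ is only a quasi-Banach space there, but the classical Peller--Semmes theory covers this case and the rank-one perturbation argument is unaffected.
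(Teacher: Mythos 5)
Your proposal is correct and follows essentially the same route as the paper: reduce $A_\phi$ to $B_{\overline{C\phi}}$ via Lemma 1 (discarding the rank-one term), identify $B_{\overline{C\phi}}$ with the classical Hankel operator $H_{\overline{C\phi}}$ via Remark 1, and invoke the Peller characterization (Zhu, Theorem 9.4.13) for $p\in[1,\infty)$ and Semmes for $p\in(0,1)$. The extra bookkeeping you supply (rank-one perturbations in the quasi-Banach range, the anti-analytic symbol convention) is exactly the detail the paper leaves implicit.
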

\begin{proof}
The case $p\in[1,\infty)$ follows from Remark 1 and the characterization of $S_p$ Hankel operators in \cite{Zhu} (Theorem 9.4.13). The case $p\in (0,1)$ follows similarly from the Main Theorem in \cite{Semmes1984}.
\end{proof}

\begin{theorem}
If $\phi$ is analytic, then $A_\phi$ is compact if and only if $C\phi\in VMOA$. Alternatively, $A_\phi$ is compact if and only if there exists a continuous function $g$ on $\partial D$ such that $H_{\overline {C\phi}}=H_g$.
\end{theorem}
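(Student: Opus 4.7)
The plan is to run the same reduction that proved Theorem 1, but in place of the Besov characterization of $S_p$ Hankel operators, invoke Hartman's theorem characterizing compact Hankel operators. Compactness is preserved by unitary conjugation and by addition of finite rank operators, so Lemma 1 tells us $A_\phi$ is compact if and only if $B_{\overline{C\phi}}$ is compact. The unitary $U$ from Lemma 1 contributes nothing, and the rank-one $R$ can be absorbed without changing compactness.

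Next, I would appeal to Remark 1 to pass from $B_{\overline{C\phi}}$ to a full classical Hankel operator. Writing $H^2=K_u\oplus uH^2$ and using that $\phi$ is analytic, Remark 1 gives $H_{\overline{C\phi}}=B_{\overline{C\phi}}\oplus 0$; since the zero summand contributes nothing to compactness, $B_{\overline{C\phi}}$ is compact if and only if $H_{\overline{C\phi}}$ is compact.

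Now I would quote Hartman's theorem in two equivalent forms. On the one hand, $H_g$ is compact precisely when $g$ can be modified modulo $H^\infty$ to be continuous on $\partial\mathbb D$, which gives the second equivalence: $A_\phi$ is compact iff there exists continuous $g$ with $H_{\overline{C\phi}}=H_g$ (any $H^\infty$ piece in the symbol of a Hankel operator is invisible, so we may replace the symbol at will within its coset in $L^\infty/H^\infty$). On the other hand, Hartman's theorem applied to the antianalytic symbol $\overline{C\phi}$ says $H_{\overline{C\phi}}$ is compact if and only if $C\phi\in VMOA$, which is the first equivalence. A small point to watch is that $C\phi$ lies in $K_u\subset H^2$ but a priori only in $H^2$, whereas Hartman's theorem is usually stated for $L^\infty$ symbols; this is handled by noting that compactness of $H_g$ forces $g$ to be in $BMO$ anyway, so the $VMOA$ conclusion applies to the analytic symbol $C\phi$ via the duality $BMOA/VMOA$.

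The only real obstacle is locating the exact reference for Hartman's theorem in the form needed (either in \cite{Zhu} or in standard Hankel-operator texts); the rest is a routine transcription of the Theorem 1 argument with ``$S_p$'' replaced by ``compact'' and ``$B_p$'' replaced by ``$VMOA$.''
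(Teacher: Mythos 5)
Your proposal is correct and follows essentially the same route as the paper: reduce via Lemma 1 and Remark 1 to the classical Hankel operator $H_{\overline{C\phi}}$, then apply Hartman's theorem (the paper cites it as Theorems 9.3.2 and 9.3.4 of \cite{Zhu}), which gives both the $VMOA$ form and the continuous-symbol form of the compactness criterion.
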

\begin{proof}
This follows from Remark 1 and theorems 9.3.2 and 9.3.4 in \cite{Zhu}. 
\end{proof}

\begin{remark}
Note that the condition that $C\phi$ be in $VMOA$ is a subtle one, because $C\phi$ involves a factor of $u$ which is highly oscillatory and $VMOA$ is defined by the smallness of the oscillation.
\end{remark}

\begin{remark}
Since $A^*_\phi=A_{\overline{\phi}}$, the above theorems all have analogues for TTOs with conjugate-analytic symbols. In fact, these theorems suffice to characterize the $S_p$ operators in all Sedlock algebras ${\mathcal B}^\alpha$ with $\alpha \notin \mathbb T$. The Sedlock algebras, defined in \cite{Sedlock2010}, are precisely the maximal algebras of bounded TTOs. We use the notation and results in \cite{Sedlock2010}. The operators in ${\mathcal B}^\alpha$ are exactly the adjoints of operators in $\mathcal B^{\overline{\alpha}^{-1}}$, so it suffices to consider $\alpha\in \mathbb D$. Let $u_\alpha=(u-\alpha)/(1-\overline{\alpha}u)$ for $\alpha \in \mathbb D$. Then the Crofoot transform $T_\alpha$ implements a unitary equivalence between operators in $\mathcal B^\alpha$ on $K_u$ and truncated Toeplitz operators with holomorphic symbol on $K_{u_\alpha}$:
$$T_\alpha A_\phi^{u_\alpha}T^{-1}_\alpha = A^u_{\phi/(1-\alpha\overline u)}.$$
All operators in  ${\mathcal B}^\alpha$ are of the latter form, so to check if $A^u_{\phi/(1-\alpha\overline u)}$ is Schatten class or compact, it suffices to examine $A_\phi^{u_\alpha}$ using the previous results.

\end{remark}

\section{Representing the Operators as Matrices}
Any TTO $A$ has a symbol $\phi+\overline{\psi}$ with $\phi$, $\psi$ holomorphic and thus $A=A_\phi+A_{\overline \psi}=A_\phi+A_\psi^*$. Applying the results of the previous section to both $A_\phi$ and $A_\psi$ gives sufficient conditions for $A$ to be in $S_p$. However, absent further structure allowing us to pass information effectively from $A$ to the summands, these conditions are not necessary. This is made clear by the results of Baranov et al.\ \cite{Baranov2010}. For both model spaces generated by Blaschke products and model spaces generated by singular inner functions they produce examples of bounded, rank one TTOs which have no bounded symbol. Recalling Sarason's result that a TTO with holomorphic symbol is bounded if and only if it has a bounded holomorphic symbol we see that this $A$ cannot be split as above with bounded summands. Noting that $A$ can be chosen to be rank one, we see further that even if $A$ is in all $S_p$ it need not be true that $A$ can be split as above with the summands in any $S_p$. 

We now consider the case where $u$ is an infinite Blaschke product. (Finite Blaschke products produce finite-dimensional model spaces, where all operators are trivially bounded, compact, and in all $S_p$ ideals.) We will assume all its zeros are simple and denote them by $z_n$ and the corresponding Blaschke factors by $b_n$. Let $k_n$ be the reproducing kernel at $z_n$ and $\hat k_n$ be the normalized reproducing kernel. Explicitly, for general $z\in \mathbb D$,
$$k_{u,z}(w) =  \frac{1-\overline{u(z)}u(z)}{1-\overline{z}w}.$$
In particular, if $z=z_n$,
$$k_n(w)=k_{u,z_n}(w)=\frac{1}{1-\overline{z_n}w}.$$
If $A=A_\phi+A_{\overline{\psi}}$ with $\phi+\overline{\psi}$ in $K_u+\overline{K_u}$, then we have a simple expression for the Berezin transform of $A$ at any $z_n$. It is 
$$\langle A_{\phi+\overline{\psi}} \hat k_n, \hat k_n\rangle = \langle (\psi + \overline{\psi}) \hat k_n, \hat k_n \rangle = \langle \phi +\overline{\psi}, |\hat k_n|^2\rangle = \langle \phi +\overline{\psi}, P_{z_n} \rangle_{H^2} = \phi(z_n) +\overline{\psi(z_n)},$$ 
where $P_{z_n}$ is the Poisson kernel for evaluating harmonic functions at $z_n$. Note this value is determined by the operator and is independent of the choice of symbol. Also, any function in $K_u$ is determined by its values on $\{z_n\}$. Thus, if $A$ has a holomorphic symbol, then, although the symbol is not completely determined by $\{\langle A\hat k_n, \hat k_n \rangle  \}$, the operator $A$ is determined. 

The functions $e_n=b_1\dots b_{n-1} \hat k_{n}$ form an orthonormal basis (see the remark following Theorem 10 in \cite{Garcia2013}). Note that when $\phi$ is analytic $A_\phi$ is upper triangular with respect to this basis. If $n_1 > n_2$, then 
$$\langle A_\phi e_{n_1} , e_{n_2} \rangle = \langle \phi  b_{n_2}\dots b_{n_1-1}\hat k_{n_1}, \hat k_{n_2}\rangle =0,$$
because we are evaluating at $z_n$ a product that includes $b_n$. The diagonal elements are exactly the $\phi(z_n)$: 
$$\langle A_\phi e_{n} , e_{n} \rangle = \langle \phi \hat k_n , \hat k_n \rangle  = \phi(z_n).$$
If $A_{\phi+\overline{\psi}}$ is in $S_1$, it has finite trace, so the norms of the diagonal elements must be absolutely summable. For the operator to be in $S_2$, the norms of the diagonal elements must be square summable, since to be in $S_2$ an operator must have square summable entries in any infinite matrix representation. We will show that under certain hypotheses, these necessary conditions are also sufficient. 

We let $u_n$ be $u$ with the $n$th factor omitted and define 
$$\delta_n = |u_n(z_n)| =  \prod_{i\ge 1, i\neq n} \left| b_i(z_n)\right|.$$

\begin{lemma}
Suppose $u$ is a Blaschke product. Let $\phi$ be an analytic symbol function such that 
$$\sum_{n=1}^\infty \frac{|\phi(z_n)|}{\delta_n} <\infty.$$
Then $A_\phi \in S_1$.
\end{lemma}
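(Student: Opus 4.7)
The plan is to exhibit an explicit trace-class expansion of $A_\phi^*$ by constructing a system $\{\gamma_n\} \subset K_u$ biorthogonal to $\{\hat k_n\}$, and then realizing $A_\phi^* = \sum_n \overline{\phi(z_n)}\,(\hat k_n \otimes \gamma_n)$. Under the hypothesis this series will converge absolutely in $S_1$ with trace norm bounded by $\sum_n |\phi(z_n)|/\delta_n$.

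First I would set
$$\gamma_n := \frac{u_n k_n}{u_n(z_n)\,\|k_n\|}.$$
Since $u = b_n u_n$ and multiplication by the inner function $u_n$ is an isometry on $H^2$, the calculation $\langle u_n k_n, u h\rangle = \langle k_n, b_n h\rangle = (b_n h)(z_n) = 0$ for all $h \in H^2$ shows $u_n k_n \perp uH^2$, so $\gamma_n \in K_u$ and $\|\gamma_n\| = \|k_n\|/(|u_n(z_n)|\,\|k_n\|) = 1/\delta_n$. Because $u_n(z_m) = 0$ for $m \neq n$, the reproducing-kernel identity gives $\langle \gamma_n, \hat k_m\rangle = \gamma_n(z_m)/\|k_m\| = \delta_{nm}$, so $\{\gamma_n\}$ is biorthogonal to $\{\hat k_n\}$.

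Next, a direct application of the reproducing property yields $A_\phi^* \hat k_n = \overline{\phi(z_n)}\,\hat k_n$, since for every $f \in K_u$ we have $\langle A_\phi f, \hat k_n\rangle = \langle \phi f, \hat k_n\rangle = \phi(z_n)\langle f, \hat k_n\rangle$. Define
$$S := \sum_{n=1}^\infty \overline{\phi(z_n)}\,(\hat k_n \otimes \gamma_n),$$
where $x \otimes y$ denotes the rank one map $f \mapsto \langle f, y\rangle\,x$. Each summand has trace norm $|\phi(z_n)|\cdot\|\hat k_n\|\cdot\|\gamma_n\| = |\phi(z_n)|/\delta_n$, so the series converges absolutely in $S_1$ with $\|S\|_1 \le \sum_n |\phi(z_n)|/\delta_n < \infty$. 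Using biorthogonality one computes $S\hat k_m = \overline{\phi(z_m)}\,\hat k_m = A_\phi^* \hat k_m$ for every $m$. Any $f \in K_u$ orthogonal to every $\hat k_n$ vanishes on $\{z_n\}$ and so lies in $uH^2 \cap K_u = \{0\}$, so $\{\hat k_n\}$ is complete; hence the two bounded operators $S$ and $A_\phi^*$ agree on a dense subspace and therefore coincide. Taking adjoints gives $A_\phi = S^* \in S_1$ with the stated bound on $\|A_\phi\|_1$.

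The main obstacle I anticipate is the construction and precise norm computation of the biorthogonal system $\{\gamma_n\}$: recognizing that $u_n k_n$ belongs to $K_u$ (via the factorization $u = b_n u_n$ and the isometry property of $M_{u_n}$) and tracking the normalization so that $\|\gamma_n\| = 1/\delta_n$, which is precisely what makes the factor $\delta_n$ appear in the hypothesis. Everything afterwards is routine trace-norm bookkeeping combined with the observation that two bounded operators agreeing on the complete set $\{\hat k_n\}$ must be equal.
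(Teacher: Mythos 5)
Your proof is correct, and in fact you have built exactly the same operator as the paper, just presented in a different way: your rank-one summand $\overline{\phi(z_n)}\,(\hat k_n \otimes \gamma_n)$ is the adjoint of $\phi(z_n)A_{\alpha_n}$, where $\alpha_n = u_n/u_n(z_n)$ is the symbol the paper uses, since $A_{u_n}$ is precisely the rank-one operator $(u_nk_n)\otimes(k_n/\|k_n\|^2)$. The genuine difference is in how the sum is identified with $A_\phi$. The paper invokes Sarason's Theorem 4.1 to conclude that the $S_1$-limit is again a TTO with holomorphic symbol and then argues that equal diagonal entries $\phi(z_n)$ in the basis $e_n$ force equality; you instead verify directly that $A_\phi^*\hat k_n = \overline{\phi(z_n)}\hat k_n$ and use biorthogonality plus completeness of $\{\hat k_n\}$ (which needs the standing assumption that the zeros are simple), which makes the identification self-contained and avoids the closedness theorem entirely. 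Your explicit rank-one computation also makes the bound $\|\phi(z_n)A_{\alpha_n}\|_{S_1} = |\phi(z_n)|/\delta_n$ transparent, whereas the paper's remark that $|u_n|=1$ on the circle only gives the operator norm and tacitly uses that $A_{u_n}$ has rank one. One small point of care: boundedness of $A_\phi$ is part of the conclusion, not a hypothesis, so rather than saying the two \emph{bounded} operators $S$ and $A_\phi^*$ agree on a dense set, it is cleaner to note that for $f$ in the dense domain of $A_\phi$ (say bounded functions in $K_u$) one has $\langle A_\phi f,\hat k_n\rangle = \phi(z_n)\langle f,\hat k_n\rangle = \langle S^*f,\hat k_n\rangle$ for all $n$, whence $A_\phi f = S^*f$ there and $A_\phi$ extends to $S^*\in S_1$; this is a cosmetic fix, and the paper is no more careful on the same point.
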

\begin{proof}
Note that $\alpha_n=u_n/u_n(z_n)$ takes the value $1$ at $z_n$ and $0$ at the other nodes. We estimate
$$\left \|\sum_{i=1}^\infty \phi(z_i)A_{\alpha_i}\right \|_{S_1}\le \sum_{i=1}^\infty |\phi(z_i)| \|A_{\alpha_i}\|_{S_1}=\sum_{i=1}^\infty \frac{|\phi(z_i)|}{\delta_i}\|A_{u_i}\|_{S_1}.$$
The modulus of $u_i$ is one on the circle and hence $\|A_{u_i}\|_{S_1}\le 1$, which shows that $B=\sum_{i=1}^\infty \phi(z_i)A_{\alpha_i}$ converges in $S_1$. It follows from Theorem 4.1 in \cite{Sarason2007} that the set of TTOs with holomorphic symbol are closed in $S_1$, so $B$ is also a TTO with holomorphic symbol. Because $B$ and $A_\phi$ have the same diagonal elements, it must be the case that $A_\phi=B$ and $A_\phi\in S_1$. 
\end{proof}
\begin{theorem}\leavevmode
\begin{enumerate}[(a)]
\item Suppose $u$ is an interpolating Blaschke product and $A_\phi$ is a TTO with analytic symbol. Then, for $1\le p \le \infty$, $A_\phi\in S_p$ if and only if $\{\phi(z_i)\}\in l^p$.
\item Suppose $u$ is an interpolating Blaschke product. For analytic $\phi$, $A_\phi$ is compact if and only if $\{\phi(z_n)\}$ tends to zero.
\end{enumerate}
\end{theorem}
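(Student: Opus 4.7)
The overall strategy is to prove part (a) at the endpoints $p=1$ and $p=\infty$ and then interpolate, and to deduce part (b) from the endpoint arguments by truncation. Necessity in (a) works uniformly in $p$: since $\{e_n\}$ is an orthonormal basis of $K_u$ with $\langle A_\phi e_n, e_n\rangle = \phi(z_n)$, the sequence $\{\phi(z_n)\}$ is the diagonal of $A_\phi$ in that basis. The pinching inequality applied to the rank-one projections $P_n = \langle \cdot, e_n\rangle e_n$ then gives $\|\{\phi(z_n)\}\|_{l^p} \le \|A_\phi\|_{S_p}$ for every $p \in [1,\infty]$.

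For sufficiency, the crucial structural fact is that an interpolating Blaschke product satisfies $\inf_n \delta_n = \delta > 0$, which is a consequence of Carleson's characterization of interpolating sequences. At $p=1$, Lemma 3 applies directly: $\|A_\phi\|_{S_1} \le \sum_n |\phi(z_n)|/\delta_n \le \delta^{-1}\|\{\phi(z_n)\}\|_{l^1}$. At $p=\infty$, I would invoke Carleson's interpolation theorem to produce $\psi \in H^\infty$ with $\psi(z_n) = \phi(z_n)$ and $\|\psi\|_\infty \le C\sup_n |\phi(z_n)|$. Since two analytic symbols that agree on the simple zero set $\{z_n\}$ of $u$ differ by an element of $uH^2$ and hence induce the same TTO, $A_\phi = A_\psi$, which is bounded with $\|A_\phi\| \le \|\psi\|_\infty$.

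To pass to general $p$, define a linear map $L : \{a_n\} \mapsto A_\phi$ sending a sequence to the unique analytic TTO with that diagonal; this is well defined by the uniqueness observed above. The endpoint estimates show that $L$ is bounded $l^1 \to S_1$ and $l^\infty \to B(K_u)$, so the standard complex interpolation identities $(S_1, S_\infty)_\theta = S_p$ and $(l^1, l^\infty)_\theta = l^p$ with $1/p = 1-\theta$ yield $L : l^p \to S_p$ boundedly for all $1 < p < \infty$. Part (b) follows along the same lines: necessity of $\phi(z_n) \to 0$ comes from the weak null convergence of the orthonormal sequence $\{e_n\}$, while for sufficiency the truncated operators $A_N = L(\phi(z_1),\dots,\phi(z_N),0,0,\dots)$ are finite rank, and the $p=\infty$ bound applied to $A_\phi - A_N$ gives $\|A_\phi - A_N\| \le C\sup_{n > N}|\phi(z_n)| \to 0$, exhibiting $A_\phi$ as a norm-limit of compact operators.

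The main technical obstacle is the interpolation step: verifying that $L$ is well defined on all of $l^\infty$ (which requires both Carleson's theorem to produce an analytic symbol for a given diagonal and the uniqueness of the resulting TTO) and that complex interpolation applies cleanly to the Schatten-class scale with $B(K_u)$ playing the role of $S_\infty$. The endpoint and compactness arguments are comparatively direct given Lemma 3 and the Carleson characterization of interpolating sequences.
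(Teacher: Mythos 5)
Your part (a) is essentially the paper's argument: necessity via the boundedness of the diagonal map (you use pinching directly where the paper verifies the endpoints $S_1\to l^1$, $S_\infty\to l^\infty$ and interpolates, but the content is the same), sufficiency at $p=1$ via Lemma 3 with $\inf_n\delta_n>0$, at $p=\infty$ via Carleson interpolation plus the uniqueness of the analytic-symbol TTO determined by its values on the zero set, and then complex interpolation of the well-defined map $L$ from sequences to TTOs. Your part (b), however, takes a genuinely different route. The paper shows that the functions $h_j=\|k_j\|u_j/\delta_j$ form the Riesz basis dual to $\{\hat k_j\}$ and are eigenvectors of $A_\phi$ with eigenvalues $\phi(z_j)$, then invokes the fact that an operator diagonalized by a Riesz basis is compact if and only if its eigenvalues tend to zero. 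You instead get necessity from weak null convergence of $\{e_n\}$ and sufficiency by approximating $A_\phi$ in operator norm by the truncations $A_N=L(\phi(z_1),\dots,\phi(z_N),0,\dots)$, using the $l^\infty\to B(K_u)$ endpoint bound on the tail. This is a clean alternative that reuses your part (a) machinery and avoids the dual-basis computation; the paper's spectral argument, in exchange, exhibits the eigenvector structure explicitly, which is of independent interest. The only step you assert without justification is that $A_N$ is finite rank; this is true but needs a line: on finitely supported sequences $L$ is given by the sum $\sum_{i\le N}\phi(z_i)A_{\alpha_i}$ from Lemma 3, and each $A_{u_i}$ has rank one, since writing $f=f_1+b_if_2$ with $f_1\in K_{b_i}$, $f_2\in K_{u_i}$ gives $u_if=u_if_1+uf_2$, so $A_{u_i}f=P_u(u_if_1)$ depends only on the one-dimensional component $f_1$. (Equivalently, the symbol of $A_N$ vanishes on $\{z_n\}_{n>N}$, hence is divisible by the tail Blaschke product, which forces rank at most $N$.) With that observation supplied, your proof is complete.
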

\begin{proof}
(a) We prove the result for $p=1$ and $p=\infty$ and then finish using interpolation. It is a general fact that the map $D$ taking a $S_p$ operator $T$ to the sequence of diagonal elements $\{\langle Te_i, e_i\rangle \}$ is bounded from $S_p$ to $l_p$ for $p\in [1,\infty]$. It is well known that all $S_1$ operators have finite trace, and the standard proof of this fact shows that $D$ is bounded from $S_1$ to $l^1$ with norm $1$. The Cauchy-Schwarz inequality applied to the values $\langle Te_i, e_i\rangle$ shows that $D$ is bounded from $S_\infty$ to $l^\infty$ with norm 1. Then, by part (3) of Theorem 2.2.4, Theorem 2.2.6, and Theorem 2.2.7 of \cite{Zhu}, $D$ is bounded from $S_p$ to $l^p$ for every $p\in [1,\infty]$. 

We now consider the other direction. First recall that for $u$ an interpolating Blaschke product, the numbers $\delta_i$ are bounded away from zero. Hence the proof of the previous lemma exhibits a bounded map from $l^1$ to $S_1$. For the other endpoint, because $u$ is an interpolating sequence, given a bounded set of target values $\{z_n\}$ we can find a bounded holomorphic function $\phi$ that takes the targets values at the nodes. Thus $A_\phi$ is a bounded operator with the required diagonal matrix elements. Finally note that if $\tilde \phi$ is a different holomorphic function taking the same values on $\{z_n\}$ then $A_\phi=A_{\tilde \phi}$. Thus the map of $l^\infty$ into bounded TTOs is well defined and independent of the choice of symbol. Hence that map must be (the extension of) the map previously defined from $l^1$ to $S_1$. Thus the second part of the theorem also follows by interpolation.

(b) Since the $\{z_n\}$ form an interpolating sequence, the reproducing kernels $\hat k_n$ are a Riesz basis for $K_u$. Define $h_j=\|k_j \|u_j /\delta_j$. We compute 
$$\langle \hat k_j, u_j \rangle = \delta_j/\|k_j \|,$$
so the set $\{ h_j\}$ forms a dual basis to $\{\hat k_j\}$ and hence is also a Riesz basis. Further, the $h_j$ are eigenvectors for $A_\phi$ with eigenvalues $\phi(z_i)$:
$$ \langle A_\phi h_n,  k_j \rangle = \langle \phi h_n , k_j \rangle, $$
and this inner product is $0$ unless $n=j$, in which case the inner product is $\phi(z_i)\delta_j$. This implies that $A_\phi h_n = \phi(z_i)h_n$. Then $A_\phi$ is diagonalized by the Riesz basis $h_n$, and it is known that an operator diagonalized by a Riesz basis is compact if and only if the sequence of eigenvalues tends to zero. 
\end{proof}

If the zeros of a Blaschke product form an interpolating sequence, then the normalized reproducing kernels associated with those points are a Riesz basis for the model space. In such cases one can try to quantify how close those vectors are to being an actual orthonormal basis. One way to do this is to consider whether the vectors form an $S_p$ basis, a Schatten class perturbation of an orthonormal basis. Following Gorkin et al., we say that the normalized reproducing kernels at the nodes $\{\hat k_n\}$ form a $U+S_p$ basis if there exist $U$ unitary and $K\in S_p$ such that $\hat k_n = (U+K)e_n$ for all $n$, where $\{e_n\}$ is any orthonormal basis \cite{Gorkin}. Requiring that the reproducing kernels corresponding to a set of nodes $\{z_n\}$ form a $U+S_p$ basis is a stronger hypothesis than assuming the $\{z_n\}$ form an interpolating sequence. Gorkin et al.\ give a quantitative characterization of $S_p$ bases for $p\ge 2$. Their paper discusses Hardy space kernels, but since the formula for $k_n$ reduces to the Hardy space kernel on the nodes $z_n$, their results apply as stated to model spaces generated by Blaschke products. 
\begin{theorem}
Let be $u$  a Blaschke product and let $A$ be a TTO that admits a general symbol $f\in H^\infty +\overline{H^\infty}$. (Equivalently, $A$ splits as the sum of two bounded operators $A_\phi+A_{\overline{\psi}}$. Since any bounded TTO with holomorphic symbol admits a $H^\infty$ symbol, we may assume $\phi, \psi \in H^\infty$.) 
\begin{enumerate}[(a)]
\item Fix some $p\in [1,\infty)$ and suppose that the functions $\{\hat k_n\}$ form a $U+S_p$ basis. Then $A \in S_p$ if and only if $\{f(z_n)\}\in l^p$. 
\item Suppose that the functions $\{\hat k_n\}$ form a $U+S_\infty$ basis. Then $A$ is compact if and only if $\{f(z_n)\}$ tends to zero.
\end{enumerate}
\end{theorem}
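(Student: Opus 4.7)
The plan is to reduce the problem to analyzing $B := L^*AL$ in the orthonormal basis $\{e_n\}$, where $L := U+K$ is the bounded, bounded-invertible transform satisfying $\hat k_n = Le_n$; here $U$ is unitary and $K\in S_p$ (respectively, compact, for part (b)). Since $L$ is bounded-invertible, the ideal property yields that $A\in S_p$ iff $B\in S_p$, and analogously $A$ is compact iff $B$ is compact. The matrix of $B$ in $\{e_n\}$ has entries
$$B_{ij} = \langle ALe_j, Le_i\rangle = \langle A\hat k_j, \hat k_i\rangle.$$

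The main technical step is to obtain a closed form for these entries. For holomorphic $\phi$, combining $A_\phi = P_uM_\phi$ with the reproducing property of $k_n$ gives $A_\phi^* \hat k_n = \overline{\phi(z_n)}\,\hat k_n$, and the same identity holds with $\psi$ in place of $\phi$. Writing $A = A_\phi + A_\psi^*$ and shifting the adjoint onto the second slot then yields
$$B_{ij} = \bigl(\phi(z_i) + \overline{\psi(z_j)}\bigr) G_{ij},$$
where $G_{ij} := \langle \hat k_j, \hat k_i\rangle$ is the Gram matrix of the normalized kernels. In particular $G_{ii}=1$, so the diagonal of $B$ is exactly $\{f(z_n)\}$.

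The $U+S_p$ hypothesis now enters: since $G = L^*L = I + (U^*K + K^*U + K^*K)$, we have $G-I \in S_p$ (respectively, compact). Substituting $G = I + (G-I)$ into the formula above decomposes
$$B = \operatorname{diag}(f(z_n)) + D_\phi(G-I) + (G-I) D_{\overline\psi},$$
where $D_\phi$ and $D_{\overline\psi}$ are the diagonal operators with entries $\phi(z_n)$ and $\overline{\psi(z_n)}$, bounded because $\phi,\psi\in H^\infty$. The last two summands therefore lie in $S_p$ (respectively, are compact) by the ideal property. Hence $B$ agrees with $\operatorname{diag}(f(z_n))$ modulo $S_p$ (resp.\ modulo the compacts), and for a diagonal operator with respect to an orthonormal basis, Schatten membership reduces to $l^p$-summability of the entries, which gives (a).

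Part (b) runs identically with $S_p$ replaced throughout by the ideal of compact operators, using that $\operatorname{diag}(f(z_n))$ is compact iff $f(z_n)\to 0$. I expect the main obstacle to be establishing the clean identity $B_{ij}=(\phi(z_i)+\overline{\psi(z_j)})G_{ij}$; once this and the inheritance $G-I\in S_p$ are in hand, everything else is a routine ideal-theoretic bookkeeping.
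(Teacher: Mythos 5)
Your proposal is correct, and at its core it is the same argument as the paper's: the key identity $\langle A\hat k_j,\hat k_i\rangle=(\phi(z_i)+\overline{\psi(z_j)})\langle\hat k_j,\hat k_i\rangle$ (kernels are eigenvectors of $A_{\overline\phi}$, $A_{\overline\psi}$), the decomposition $D_\phi G+GD_{\overline\psi}$, and the perturbation $G=I+J$ with $J\in S_p$ are exactly the ingredients used in the paper. Where you differ is the bookkeeping that ties this Gram-weighted matrix back to $A$: the paper writes the matrix of $A$ in the basis $\{Ue_n\}$, substitutes $Ue_i=\hat k_i-Ke_i$, and checks that the three cross/error terms lie in $S_p$, whereas you conjugate once, $B=L^*AL$ with $L=U+K$, and invoke invertibility of $L$ together with the two-sided ideal property; this avoids the four-term expansion and also gives $G-I=L^*L-I=U^*K+K^*U+K^*K\in S_p$ directly rather than by citation. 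The only point you assert without argument is the bounded invertibility of $L$; the paper does the same, and it is in fact automatic here since $L$ is a compact perturbation of a unitary (hence Fredholm of index zero) whose range contains the kernels $\hat k_n$, which span $K_u$, so it would be worth a sentence in a final write-up. Part (b) goes through verbatim with the compact operators in place of $S_p$, as you say.
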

\begin{proof}
We will move freely between operators and their representations as infinite-dimensional matrices. Fix some basis $e_n$. By hypothesis, there exists unitary $U$ and $K\in S_p$ such that $\hat k_n = (U+K)e_n$. With respect to the basis $Ue_n$, $A$ has matrix representation $[\langle A Ue_i, Ue_j \rangle]_{ij}$. We write $Ue_i=\hat k_i - Ke_i$. Then 
$$[\langle A Ue_i, Ue_j \rangle]=[\langle A(\hat k_i - Ke_i), (\hat k_j- Ke_j)]$$ $$=[\langle A\hat k_i, \hat k_j\rangle] - [\langle A\hat k_i, Ke_j] - [\langle Ke_i, \hat k_j \rangle ] + [\langle AKe_i, Ke_j\rangle ].$$
We will show that the first term is in $S_p$ if and only if $\{f(z_n)\}\in l^p$ and that the other three terms are always in $S_p$. This will complete the proof. 
We see that
$$[\langle A\hat k_i, \hat k_j\rangle]=[A_\phi \hat k_i +A_{\overline{\psi}}\hat k_i, \hat k_j]=[\langle A_{\overline{\psi}}\hat k_i, \hat k_j\rangle ] + [\langle \hat k_i, A_{\overline{\phi}} \hat k_j\rangle ]=\overline{\psi(z_i)}[\langle\hat  k_i, \hat k_j\rangle ] +\phi(z_j)[\langle \hat k_i, \hat k_j \rangle].$$
Let $D_\phi$ be the diagonal matrix with entries $\phi(z_i)$ and define $D_{\overline{\psi}}$ analogously. Note that these diagonal matrices are bounded by hypothesis. Let $G$ be the Gram matrix. Then
$$[\langle A\hat k_i, \hat k_j\rangle]=D_{\overline{\psi}}G+GD_\phi.$$
The hypothesis that the reproducing kernels form a $U+S_p$ basis implies that there exists $J\in S_p$ such that $G=I+J$ \cite{Gorkin}.  Then 
$$[\langle A\hat k_i, \hat k_j\rangle]=D_{\overline{\psi}}(I+J)+(I+J)D_\phi.$$
This is equal to an $S_p$ operator plus $D_{\overline{\psi}}+D_\phi$, which proves our claim about the first term. 

Consider now the second term. Under our hypotheses, $V=U+K$ is invertible, and we have
$$[\langle A\hat k_i, Ke_j] = [\langle \hat k_i ,A^*Ke_j\rangle]= [\langle V^{-1}\hat k_i, V^* A^*Ke_j \rangle] = [e_i, V^* A^*Ke_j \rangle].$$
This is the matrix of the $S_p$ operator $V^* A^*K$. Similarly, the third term is always in $S_p$. The fourth term is the matrix of the $S_p$ operator $K^*AK$. 

The argument for part (b) is similar. 
\end{proof} 

\begin{remark}
Using this theorem, we see that even if $Z=\left\{
z_{n}\right\}  $ satisfies the strong separation condition that the $\left\{
\hat{k}_{n}\right\}  $ be a $U+S_{p}$ basis, it may not be possible to split a
TTO in $S_{p}$ into a sum of two TTO's in that class, one with a holomorphic
symbol, the other with conjugate holomorphic symbol. Examples are obtained by
selecting the symbol $f=\phi+\bar{\psi}$ with $\phi,\psi$ holomorphic,
$\left\{  \phi(z_{n})+\bar{\psi}(z_{n})\right\}  \in\ell^{p}(Z)$ and
$\{\phi(z_{n})\},$ $\{\bar{\psi}(z_{n})\}\notin\ell^{p}(Z).$
\end{remark}

We now give a result for model spaces generated by singular inner functions. Our main tool will be results about the action of the operation of triangular projection on the Schatten classes originally due to Gohberg, Krein, Brodskii, and Macaev. We will use the presentation and formalism of \cite{Erdos1978}. Let $H$ be any Hilbert space. Given a finite nest $N$ of subspaces $0=S_0\subset \dots \subset S_n=H$, we let $P_{S_i}$ be the orthogonal projection onto $S_i$ and define $\triangle P_{S_i}=P_{S_i}-P_{S_{i-1}}$. Then, given some bounded operator $A$, we define 

$$\mathcal T_N(A)=\sum P_{S_{i-1}}A\triangle P_{S_i}$$
$$\mathcal R_N (A)= \sum P_{S_i}A\triangle P_{S_i}$$
$$\mathcal D_N(A)=\sum \triangle P_{S_i}A\triangle P_{S_i}.$$

If we think of $A$ as having a block matrix representation where the blocks correspond to the subspaces in the partition, then $\mathcal T_N(A)$ is the strictly upper triangular part of the matrix, $\mathcal R_N (A)$ is the upper triangular part including the diagonal blocks, and $\mathcal D_N(A)$ is the diagonal. Thus $\mathcal R_N = \mathcal T_N +\mathcal D_N$. Theorem 3.2 in \cite{Erdos1978} shows that if $p\in(1,\infty)$ and $A\in S_p$, then each of the above nets converges in $S_p$ as the nest is refined.

In the proof of next theorem, we construct a continuous nest of subspaces with respect to which a TTO with holomorphic symbol becomes upper triangular and a TTO with anti-holomorphic symbol becomes lower triangular. By the above discussion, the projection onto the upper triangular part is bounded in $S_p$. Then it seems that applying this projection to an arbitrary TTO $A_{\phi + \overline{\psi}}$ gives $A_\phi$, and hence that we can split $S_p$ TTOs into their holomorphic and anti-holomorphic parts boundedly. Unfortunately, this is not the case. In general, both the holomorphic and the anti-holomorphic parts contribute to the limit of the diagonal net, so the upper triangular projection recovers the holomorphic part plus some error term. However, by imposing a suitable hypothesis on the symbol, we can insure this error term is zero, so that this scheme works; that is, we can write $A$ as a sum of two operators and use the results of section 2 on each to obtain necessary and sufficient conditions for $A$ to be in $S_p$. 

If $u$ is a singular inner function, we will use $P_r$ to mean $P_{K_{u^r}}$.

\begin{theorem}
Let $u$ be an inner function and $p\in (1, \infty)$. Let $A$ be a $S_p$ TTO with canonical symbol $f=\phi+\overline \psi \in K_u + \overline K_u$ such that, for some $r \in (0,1)$, $A_{P_r f} \in S_p$ (equivalently, $A_{P_r \phi } \in S_p$). Then $A_\phi$ and $A_{\overline\psi}$ are both $S_p$ operators.
\end{theorem}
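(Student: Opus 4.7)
The plan is to implement the strategy sketched just before the statement: construct a continuous nest for which $A_\phi$ is upper triangular and $A_{\bar\psi}$ is lower triangular, apply the Erdos $S_p$-bounded triangular projection, and use the hypothesis to annihilate the resulting diagonal error.

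Take $N = \{T_s\}_{s \in [0,1]}$ with $T_s = u^{1-s} K_{u^s}$, an increasing family from $\{0\}$ to $K_u$ by virtue of the inclusion $u^{s_2-s_1} K_{u^{s_1}} \subset K_{u^{s_2}}$ for $s_1 \le s_2$. For any holomorphic $\gamma \in K_u$ and $v = u^{1-s} w \in T_s$ with $w \in K_{u^s}$, decomposing $\gamma w = P_{K_{u^s}}(\gamma w) + u^s k$ in $H^2 = K_{u^s} \oplus u^s H^2$ gives
$$A_\gamma v = P_u(u^{1-s}\gamma w) = u^{1-s} P_{K_{u^s}}(\gamma w) \in T_s,$$
since $u^{1-s} \cdot u^s k = uk \in uH^2$. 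Hence every $T_s$ is invariant under both $A_\phi$ and $A_\psi$, so $A_\phi$ is upper triangular and $A_{\bar\psi} = A_\psi^*$ is lower triangular with respect to $N$. In Erdos' notation, $\mathcal{R}_N(A_\phi) = A_\phi$ exactly and $\mathcal{R}_N(A_{\bar\psi}) = \mathcal{D}_N(A_{\bar\psi})$, which yields $\mathcal{R}_N(A) = A_\phi + \mathcal{D}_N(A_{\bar\psi})$ and $\mathcal{D}_N(A) = \mathcal{D}_N(A_\phi) + \mathcal{D}_N(A_{\bar\psi})$ for every finite nest. Subtracting these,
$$A_\phi = \mathcal{R}_N(A) - \mathcal{D}_N(A) + \mathcal{D}_N(A_\phi).$$

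The remaining task is to replace $\mathcal{D}_N(A_\phi)$ by a quantity controlled by the hypothesis. First note $P_r f = P_r \phi$, since $\bar\psi \perp H^2 \supset K_{u^r}$ (because $\psi \in K_u$ satisfies $\psi(0)=0$), which justifies the parenthetical in the statement. Writing $\phi = P_r \phi + u^r h$ with $h \in K_{u^{1-r}}$, so $A_\phi = A_{P_r \phi} + A_{u^r h}$, I claim $\mathcal{D}_N(A_{u^r h}) = 0$ whenever $\max_i(s_i - s_{i-1}) \le r$. Under the unitary $M_{u^{1-s_i}}^*$, the block $V_i = T_{s_i} \ominus T_{s_{i-1}}$ corresponds to $K_{u^{s_i - s_{i-1}}} \subset K_{u^{s_i}}$; the same computation as above gives $A_{u^r h}(u^{1-s_i} w) = 0$ when $s_i < r$ (since then $u^{r+1-s_i} hw \in uH^2$) and $A_{u^r h}(u^{1-s_i} w) = u^{1-s_i}\bigl(u^r P_{K_{u^{s_i - r}}}(hw)\bigr)$ when $s_i \ge r$. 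In the latter case $u^r K_{u^{s_i - r}} \subset u^{s_i - s_{i-1}} K_{u^{s_{i-1}}}$ (using $r \ge s_i - s_{i-1}$ and $u^a K_{u^b} \subset K_{u^{a+b}}$), which is the orthogonal complement of $K_{u^{s_i - s_{i-1}}}$ inside $K_{u^{s_i}}$; so projecting back onto $V_i$ returns $0$. Thus for nests finer than $r$, $\mathcal{D}_N(A_\phi) = \mathcal{D}_N(A_{P_r \phi})$, and substituting,
$$A_\phi = \mathcal{R}_N(A) - \mathcal{D}_N(A) + \mathcal{D}_N(A_{P_r \phi}).$$
Since $A, A_{P_r \phi} \in S_p$ and $\mathcal{R}_N, \mathcal{D}_N$ are bounded on $S_p$ and converge in $S_p$-norm as $N$ refines (Erdos' theorem, for $p \in (1,\infty)$), the right-hand side converges in $S_p$ to an element of $S_p$. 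Hence $A_\phi \in S_p$, and $A_{\bar\psi} = A - A_\phi \in S_p$.

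The main obstacle is the choice of nest and verifying the vanishing diagonal. The symmetric alternative $\{K_{u^s}\}$ would make $A_{\bar\psi}$ triangular but leave $A_\phi$ uncontrolled; the asymmetric $T_s = u^{1-s} K_{u^s}$ is forced by the invariance requirement for holomorphic symbols, and the hypothesis on $A_{P_r \phi}$ couples to the geometry through the single orthogonality $u^r K_{u^{s_i - r}} \perp K_{u^{s_i - s_{i-1}}}$ that becomes available precisely when the partition is finer than $r$.
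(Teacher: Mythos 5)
Your proof is correct and takes essentially the same route as the paper: triangular truncation (Erdos) along the continuous nest built from the fractional powers of $u$, with the diagonal blocks of $A_{\phi-P_r\phi}=A_{u^rh}$ vanishing as soon as the partition is finer than $r$. The only cosmetic differences are that you use the complementary nest $T_s=u^{1-s}K_{u^s}=K_u\ominus K_{u^{1-s}}$, so that $A_\phi$ rather than $A_{\overline\psi}$ is the upper-triangular summand, and you carry the term $\mathcal{D}_N(A_{P_r\phi})$ explicitly where the paper instead reduces to $A_{P_rf}=0$ and passes to adjoints.
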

\begin{proof}
We may assume without loss of generality that $A_{P_r f}=0$ by noting that $A\in S_p$ if and only if $A-A_{P_r f}$ is. 
 
Define $K_a = K_{u^a}$. For $a\in [0,1]$, this gives a continuous nest of subspaces 
$$K_0 = 0  \subset \dots \subset K_a \subset \dots \subset K_1 = K_u.$$
We have $A_{\overline{\psi}} K_a \subset K_a$, so for any finite subnest, the corresponding block matrix for $A_{\overline{\psi}}$ is upper triangular. By taking adjoints, we see that the block matrix for $A_\phi$ is lower triangular. We have 
$${\mathcal R}_N(A)={\mathcal R}_N(A_\phi+A_{\overline{\psi}})={\mathcal D}_N(A_\phi) + A_{\overline{\psi}}.$$
It suffices to show that ${\mathcal D}_N(A_\phi)$ tends to zero. By taking adjoints again, it is enough to show that ${\mathcal D}_N(A_{\overline{\phi}})$ tends to zero. Choose a finite nest  ${K_{q_i}}$, with $q_0=0$, $q_n=1$ and $0\le i \le n$. We further stipulate that $|q_i-q_{i-1}|<r$ for all $i>0$. We will show that $\triangle K_{q_i} A_{\overline{\phi}} \triangle K_{q_i}$ vanishes for every $i$, so that ${\mathcal D}_N(A_{\overline{\phi}})$ is zero for all sufficiently fine partitions. 

Under our hypotheses, we can write $\phi = u^r \phi_1$ for some holomorphic $\phi_1$. This shows that $A_{\overline{\phi}} K_{q_i} \subset K_{q_{i-1}}$. Hence $\triangle K_{q_i} A_{\overline{\phi}} \triangle K_{q_i}$ vanishes for every $i$.
\end{proof}

A similar theorem holds when $A_{P_r \overline{f}}\in S_p$. In section 6 of \cite{Baranov2010} the authors consider, among other things, conditions that allow effective splitting of a TTO into an analytic and conjugate analytic component. The technical hypotheses they impose are similar in spirit to those in the previous theorem.

In the next several results we show that a similar splitting is possible for some products of singular inner functions. In the case of an atomic singular inner function, such a splitting into $S_p$ summands is always possible. Rochberg proved this in \cite{Rochberg1987}.

For the remainder of this section, we let $u$ and $v$ be two atomic singular inner functions with single disjoint atoms. The functions $u$ and $v$ form a corona pair, so we may apply the corona theorem. Then there exist $a,b\in H^\infty$ such that $1=au+bv$. We let $B_\phi$ and $C_\phi$ denote TTOs on the spaces $K_v$ and $K_u$ respectively. 
\begin{lemma}
Let $A$ be a bounded TTO on $K_{uv}$ that admits a symbol $f\in H^\infty$. Then $A\in S_p(K_{uv})$ if and only if $B_{auf}\in S_p(K_v)$ and $C_{bvf}\in S_p(K_u)$. 
\end{lemma}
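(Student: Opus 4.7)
The plan is to use the corona identity $1 = au + bv$ to construct an explicit bounded linear isomorphism $\Pi\colon K_v \oplus K_u \to K_{uv}$ under which $A$ is similar to the block-diagonal operator $B_{auf} \oplus C_{bvf}$. Since similarity by a bounded invertible map preserves every Schatten class, the equivalence in the lemma is immediate once this similarity is in hand.

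The first step is to define $\Pi(h_1, h_2) = u h_1 + v h_2$, to verify it is bounded (using $uK_v \subset K_{uv}$ and $vK_u \subset K_{uv}$), and to exhibit an explicit bounded inverse. For the latter I would invoke the elementary projection identities $P_{uv}(u\eta) = u P_v(\eta)$ and $P_{uv}(v\eta) = v P_u(\eta)$ for $\eta \in H^2$, both of which drop out of the decompositions $K_{uv} = K_u \oplus u K_v = K_v \oplus v K_u$, together with the corona decomposition $h = P_{uv}\bigl((au+bv)h\bigr) = u P_v(ah) + v P_u(bh)$ valid for any $h \in K_{uv}$. This identifies $\Pi^{-1}(h) = (P_v(ah), P_u(bh))$, bounded in terms of $\|a\|_\infty$ and $\|b\|_\infty$; injectivity of $\Pi$ itself follows independently from coprimality of $u$ and $v$ as inner functions.

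The second step is the computation, using $f \in H^\infty$ and the same projection identities,
$$A(uh_1 + vh_2) = P_{uv}(fuh_1) + P_{uv}(fvh_2) = uP_v(fh_1) + vP_u(fh_2) = uB_f h_1 + vC_f h_2,$$
which shows $\Pi^{-1} A \Pi = B_f \oplus C_f$ on $K_v \oplus K_u$. To bridge to the symbols appearing in the statement, I would observe that $B_{bvf}h = P_v(bvfh) = 0$ for every $h \in K_v$ (since $bvf \in vH^\infty$ and $vH^2 \perp K_v$), and hence $B_{auf} = B_{auf + bvf} = B_f$; the symmetric argument yields $C_{bvf} = C_f$. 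Combining, $A \in S_p(K_{uv})$ iff $B_f \oplus C_f \in S_p(K_v \oplus K_u)$ iff $B_{auf}$ and $C_{bvf}$ both lie in $S_p$.

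The main technical obstacle is verifying that $\Pi^{-1}$ is bounded, which is exactly the point at which the corona hypothesis is used in an essential way; after that, everything reduces to routine bookkeeping with the two projection identities and the vanishing of a TTO whose symbol lies in the ``wrong'' one-sided ideal.
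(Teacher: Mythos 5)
Your proof is correct, and it organizes the argument differently from the paper. The paper proves the two implications separately: for necessity it uses the orthogonal decompositions $K_{uv}=K_u\oplus uK_v$ and $K_{uv}=K_v\oplus vK_u$ to see that the compressions of $A$ to $K_u$ and $K_v$ (which inherit membership in $S_p$) are exactly $C_{bvf}$ and $B_{auf}$, since the ``wrong'' summand of the symbol $f=auf+bvf$ projects to zero; for sufficiency it uses the corona identity to write, for $g\in K_{uv}$, $A_f(g)=uB_{auf}B_a(g)+vC_{bvf}C_b(g)$, a combination of the two $S_p$ operators with bounded factors. You instead package the same ingredients --- the corona identity $1=au+bv$ and the projection identities $P_{uv}(u\eta)=uP_v(\eta)$, $P_{uv}(v\eta)=vP_u(\eta)$ --- into a single explicit bounded invertible map $\Pi(h_1,h_2)=uh_1+vh_2$ with $\Pi^{-1}h=(P_v(ah),P_u(bh))$, under which $A$ is similar to $B_{auf}\oplus C_{bvf}$; in effect the paper's sufficiency computation is your identity $A=\Pi(B_{auf}\oplus C_{bvf})\Pi^{-1}$ read in one direction, but by making the invertibility of $\Pi$ explicit you get both implications at once, dispense with the separate compression argument, and obtain a bit more (two-sided comparability of the $S_p$ norms, and transfer of compactness or any other similarity-invariant property). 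Two small remarks: your appeal to coprimality of $u$ and $v$ for injectivity is legitimate in the paper's setting (atomic singular inner functions with disjoint atoms), but it is not needed, since $\Pi^{-1}\Pi=I$ can be checked directly from $P_v(auh_1)=P_v\bigl((1-bv)h_1\bigr)=h_1$ for $h_1\in K_v$ and $P_v(avh_2)=0$; and your reduction $B_{auf}=B_f$, $C_{bvf}=C_f$ via vanishing of the TTO whose symbol lies in $vH^\infty$ (resp.\ $uH^\infty$) is exactly the observation the paper uses in its necessity step.
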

\begin{proof}
Suppose that $A\in S_p$. Note that $K_{uv}=K_u\oplus u K_v$, so the compression of $A$ to $K_u$ is in $S_p$. We have $A_f=A_{auf+bvf}$, and because $K_u$ is orthogonal to all multiples of $u$, the compression to $K_u$ is exactly $C_{bvf}$. Similar reasoning shows that $B_{auf}\in S_p$.

Now suppose that $B_{auf}$ and $C_{bvf}$ are both $S_p$ operators on their respective spaces. Let $g=aug+bvg$ be an arbitrary element of $K_{uv}$. We see 
$$A_f(g)=P_{K_{uv}}[(aug+bvg)(auf+bvf)]=P_{K_{uv}}[a^2u^2fg+b^2v^2fg]=uP_v[a^2ufg]+vP_u[b^2vfg]$$
$$=uB_{auf}B_a(g)+ vC_{bvf}C_b(g).$$
This is a sum of $S_p$ operators, so it is in $S_p$.
\end{proof}
\begin{theorem}
Let $A$ be a TTO on $K_{uv}$ with bounded symbol $f=\phi+\overline \psi$. Then $A\in S_p(K_{uv})$ if and only if $A_\phi \in S_p(K_{uv})$ and $A_{\overline \psi}\in S_p(K_{uv})$. 
\end{theorem}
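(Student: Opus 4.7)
The easy direction is immediate: since $S_p$ is a linear space, $A_\phi, A_{\overline\psi} \in S_p(K_{uv})$ implies $A = A_\phi + A_{\overline\psi} \in S_p(K_{uv})$.

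For the converse, the plan is to compress $A$ onto subspaces corresponding to the single-atom inner functions $u$ and $v$ separately, apply Rochberg's splitting theorem there, and then reassemble via Lemma 3. More concretely, using the orthogonal decomposition $K_{uv} = K_u \oplus uK_v$, a straightforward inner-product computation shows that the compression of $A$ to $K_u$ is the TTO $C_f = C_\phi + C_{\overline\psi}$ on $K_u$, and the compression of $A$ to $uK_v$ is unitarily equivalent, via multiplication by $u$, to $B_f = B_\phi + B_{\overline\psi}$ on $K_v$. Since compressions preserve Schatten class, both $C_f \in S_p(K_u)$ and $B_f \in S_p(K_v)$. The result of Rochberg cited above, which applies precisely because $u$ and $v$ are each atomic singular inner functions with a single atom, then guarantees the splittings $C_\phi, C_{\overline\psi} \in S_p(K_u)$ and $B_\phi, B_{\overline\psi} \in S_p(K_v)$.

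To lift these single-atom conclusions back to $K_{uv}$, the plan is to apply Lemma 3 to the analytic symbol $\phi \in H^\infty$: it suffices to verify that $B_{au\phi} \in S_p(K_v)$ and $C_{bv\phi} \in S_p(K_u)$. The key identity here is $B_{au\phi} = B_{au} B_\phi$, which holds because, for any $g \in K_v$, the discrepancy $P_v(au\phi g) - P_v\bigl(au \cdot P_v(\phi g)\bigr) = P_v\bigl(au \cdot (I-P_v)\phi g\bigr)$ vanishes: $(I-P_v)\phi g$ lies in $vH^2$, $au \in H^\infty$, and hence $au \cdot (I-P_v)\phi g \in vH^2$ is annihilated by $P_v$. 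Since $B_{au}$ is bounded and $B_\phi \in S_p$, the product is in $S_p$; the analogous computation gives $C_{bv\phi} = C_{bv} C_\phi \in S_p(K_u)$. Lemma 3 then delivers $A_\phi \in S_p(K_{uv})$. Running the same argument with $\psi$ in place of $\phi$ yields $A_\psi \in S_p(K_{uv})$, and taking adjoints (using that $S_p$ is $*$-closed and $A_{\overline\psi} = A_\psi^*$) produces $A_{\overline\psi} \in S_p(K_{uv})$.

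The step I expect to be the main technical point, though not deep, is the factorization identity $B_{au\phi} = B_{au}B_\phi$ and its $C$-analogue: this is not a general property of TTO symbols (multiplication of symbols by $H^\infty$ functions does not usually factor through the TTO construction), but rather a specific consequence of the fact that the multipliers $au$ and $bv$ arising from the corona decomposition carry the inner factors $u$ and $v$, respectively, so that the error term produced by inserting $P_v$ or $P_u$ lies in the appropriate invariant subspace.
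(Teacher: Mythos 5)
Your proof is correct and is essentially the paper's argument: compress $A$ to the two single-atom model spaces, apply Rochberg's splitting theorem there, reassemble the analytic part through Lemma 3, and recover $A_{\overline\psi}$ at the end. The only difference is cosmetic: the paper rewrites the symbol with the corona identity $1=au+bv$ before invoking Rochberg, so it obtains $C_{bv\phi}\in S_p(K_u)$ and $B_{au\phi}\in S_p(K_v)$ directly, whereas you first get $C_\phi$ and $B_\phi$ in $S_p$ and then pass to the symbols required by Lemma 3 via the (correct) factorizations $C_{bv\phi}=C_{bv}C_\phi$ and $B_{au\phi}=B_{au}B_\phi$ --- a step you could even skip, since $\phi-bv\phi=au\phi\in uH^2$ and $\phi-au\phi=bv\phi\in vH^2$, so $C_{bv\phi}=C_\phi$ on $K_u$ and $B_{au\phi}=B_\phi$ on $K_v$.
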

\begin{proof}
Write $f=ua\phi+ vb\phi + \overline {ua\psi} + \overline{vb\psi}$. The the compression of $A$ to $K_u$ is also in $S_p$. This compression is $B_{\overline{u a\psi}} +B_{v b\phi} + B_{\overline{v b \psi}}$. Note that $B_{\overline{u a \psi}} =0$ on $K_u$. Then $B_{v b\phi} + B_{\overline{vb\psi}}$ is an $S_p$ operator, and using the results of \cite{Rochberg1987} on the space $K_u$, we see that $B_{vb\phi}\in S_p$ on $K_u$. Similar reasoning shows that $C_{ua\phi}\in S_p$ on $K_v$. Then the previous lemma shows that $A_\phi\in S_p$. 
\end{proof}

We note that the previous result extends to any corona pair of inner functions $u$ and $v$ and by induction to any $n$ inner functions that are valid corona data.

\section{Polynomial Symbol}
In \cite{Ahern1970}, Ahern and Clark prove that any model space is unitarily equivalent to a sum of three $L^2$ spaces of a simple form. Specifically, they prove the following result. Suppose that $u$ is an inner function with canonical decomposition $u=Bs\triangle$, where
$$B(z)=\prod_{n=1}^\infty \left(-\frac{\overline{a_n}}{|a_n|} \right) \frac{z-a_n}{1-\overline{a_n}z},$$
$$s(z) = \exp \left( -\int_0^{2\pi} \frac{e^{i\theta} + z }{e^{i\theta} - z}\ d\sigma(\theta)\right),$$
$$\triangle(z) = \exp\left( - \sum_{n=1}^\infty r_n \frac{e^{i\theta_n}+z}{e^{i\theta_n} - z } \right).$$
Here $a_n$ is a Blaschke sequence (we define $\overline{a_n}/|a_n|=1$ when $a_n=0$), $\sigma$ is a finite, nonnegative, continuous, singular measure, and the $r_n$ are nonnegative with $\sum r_n < \infty$. 

Then there is a unitary operator $V$ taking $K_u$ to $L^2(d\sigma_B) \oplus L^2 (d\sigma) \oplus L^2(d\tau)$, where $\theta_B$ is the measure on the positive integers with mass $1-|a_k|$ at $k$ and $\tau$ is the measure on $[0,\infty)$ defined by $\tau = r_{n+1} m$ on the real interval $[n, n+1)$ and $m$ is the Lebesgue measure. Further, $V$ takes the TTO $A_z$ to an operator of the form $M+H$, where $M$ is a multiplication operator and $H$ is a Hilbert-Schmidt operator. The operator $M$ has the form
$$M=M_B\oplus M_s \oplus M_\triangle,$$
where $(M_B f)(n)=z_n f(n)$, $(M_sf)(\lambda)= e^{i\lambda}f(\lambda)$ and $(M_\triangle f)(\lambda)=e^{ir_{n+1}}f(\lambda)$ for ${\lambda\in [n,n+1)}$.

\begin{theorem}
Suppose the symbol function $q\in L^2(\partial \mathbb D)$ is a polynomial in $z$ and $\overline z$.
\begin{enumerate}[(a)]
 \item If $u$ is a Blaschke product with zeros $z_k$, then $A_q \in S_p$ for $p\ge 2$ if and only if $\{q(z_i)\}\in l^p$, and $A$ is compact if and only if $\{q(z_i)\}$ tends to zero.
\item If $u$ is a continuous singular function, then $A_q$ is compact if and only if $q$ vanishes on the support of $\sigma$, and in this case it is the zero operator. 
\item If $u$ is an atomic singular function, then $A_q$ is compact if and only if $q(e^{iu_{n}})=0$ for every $u_n$, and in this case $A_q$ is the zero operator. 
\end{enumerate}
\end{theorem}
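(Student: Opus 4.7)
The plan is to use the Ahern--Clark decomposition to reduce $A_q$, modulo a Hilbert--Schmidt operator, to a multiplication operator on $L^2(d\sigma_B)\oplus L^2(d\sigma)\oplus L^2(d\tau)$, and then to analyze when that multiplication operator is compact or Schatten class on each summand. First I would expand $q$ as a trigonometric polynomial on $\partial\mathbb D$,
$$q(e^{i\theta}) = \sum_{n\ge 0} c_n e^{in\theta} + \sum_{n\ge 1} c_{-n} e^{-in\theta},$$
and verify $A_{z^n}=A_z^n$ for $n\ge 0$ by a direct induction (the only issue is whether $P_u(z\cdot P_{uH^2}(zf))$ vanishes, which it does because $uH^2$ is invariant under multiplication by $z$). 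Taking adjoints gives $A_{\bar z^n}=(A_z^*)^n$, so $A_q = p_1(A_z) + p_2(A_z^*)$ for analytic polynomials $p_1,p_2$ satisfying $p_1(e^{i\theta})+p_2(e^{-i\theta})=q(e^{i\theta})$.

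Next, writing $VA_zV^{*}=M+H$ with $H\in S_2$ as supplied by Ahern--Clark, I would expand
$$VA_qV^{*} = p_1(M+H) + p_2(M^{*}+H^{*}).$$
Since $M$ is bounded on each summand and $S_2$ is a two-sided ideal, every term in the binomial expansion of $(M+H)^n$ other than $M^n$ contains at least one factor of $H$ and hence lies in $S_2$; the same holds for $(M^{*}+H^{*})^n$. Summing over $n$ with the coefficients of $p_1$ and $p_2$ yields
$$VA_qV^{*} = M_q + E, \qquad E\in S_2,$$
where $M_q:=p_1(M)+p_2(M^{*})$ acts as multiplication by $p_1(\omega)+p_2(\bar\omega)=q(\omega)$, with $\omega=z_n$ on the Blaschke component, $\omega=e^{i\lambda}$ on the continuous singular component, and $\omega=e^{ir_{n+1}}$ on the atomic component. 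Because $E$ is automatically compact and automatically in $S_p$ for $p\ge 2$, one obtains $A_q$ compact $\iff$ $M_q$ compact, and (for $p\ge 2$) $A_q\in S_p \iff M_q\in S_p$.

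From here each case reduces to a standard multiplication-operator fact. In (a), $M_q$ is diagonal with entries $q(z_n)$ in the obvious orthonormal basis, so $M_q\in S_p$ iff $\{q(z_n)\}\in l^p$ and $M_q$ is compact iff $q(z_n)\to 0$. In (b), $M_q$ multiplies by the continuous function $q$ on the atomless space $L^2(d\sigma)$, and is compact iff the multiplier vanishes $\sigma$-almost everywhere, i.e.\ iff $q$ vanishes on $\mathrm{supp}(\sigma)$. In (c), $M_q$ is multiplication by the constant $q(e^{ir_{n+1}})$ on the infinite-dimensional summand $L^2([n,n+1),r_{n+1}m)$, so compactness forces that constant to vanish for every $n$.

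Finally, the upgrade from ``compact'' to ``zero'' in (b) and (c) comes from the fact that a nonzero trigonometric polynomial has only finitely many zeros on $\partial\mathbb D$. In (b), $\mathrm{supp}(\sigma)$ is uncountable since $\sigma$ is a nonzero continuous measure, forcing $q\equiv 0$; in (c), the points $\{e^{ir_{n+1}}\}$ form an infinite subset of the compact circle and hence have an accumulation point (indeed $r_n\to 0$ because $\sum r_n<\infty$, so they cluster at $1$), again forcing $q\equiv 0$. Either way $A_q=A_0=0$. The main bookkeeping obstacle is the step $VA_qV^{*}-M_q\in S_2$: it is conceptually routine, but one must apply the ideal property carefully across the binomial expansion and track that $M$ is bounded on each Ahern--Clark summand. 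All remaining pieces are either Ahern--Clark used as a black box or classical facts about multiplication operators and trigonometric polynomials.
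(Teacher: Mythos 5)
Your proof follows essentially the same route as the paper: use the Ahern--Clark decomposition to write $A_q$, modulo a Hilbert--Schmidt error, as the multiplication operator $M_q$ (via $A_{z^n}=A_z^n$ and adjoints for the anti-analytic part), and then analyze that multiplier on each component. The only difference is that you spell out details the paper leaves implicit---notably the binomial bookkeeping and the finitely-many-zeros argument needed to upgrade ``compact'' to ``$A_q=0$'' in parts (b) and (c)---which strengthens rather than alters the argument.
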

\begin{proof}
\begin{enumerate}[(a)]
\item It is clear that $A_z A_z = A_{z^2}$, and $A_{z^2}=M^2 + H'$, where $H'$ is Hilbert-Schmidt. From this we deduce that if $p$ is a polynomial in $z$ and $\overline z$, then modulo some Hilbert-Schmidt operator, $A_q$ is a multiplication operator $M$ given by $Mf=q(z_n) f(n)$. Then $A\in S_p$ for $p\ge 2$ if and only if $M$ is, and $A$ is compact if and only if $M$ is. Hence it suffices to study $M$, which is diagonal with entries $\{q(z_i)\}$. Then $M$ is in $S_p$ for $p\ge 2$ if and only if this sequence is in $l^p$, and $M$ is compact if and only if $\{q(z_i)\}$ tends to zero. 
\item Here $M$ takes the form $(Mf)(\lambda)= q(e^{i\lambda})f(\lambda)$, and the only compact multiplication operator is the zero operator. 
\item Here $M$ is $(Mf)(\lambda)=q(e^{ir_{n+1}})f(\lambda)$, and the only compact multiplication operator is the zero operator. 
\end{enumerate}
\end{proof}

One naturally wonders if the above results extend to more general functions by taking limits. We note that using this idea, Ahern and Clark showed that when $f$ is continuous on $\partial D$, $A_f$ is compact if and only if $f(e^{i\theta})=0$ for all $e^{i\theta}\in \operatorname{supp} u \cap \partial \mathbb D$ (Theorem 5.4 in \cite{Ahern1970}).
\section{Hilbert-Schmidt Truncated Hankel Operators}

We noted in Lemma 1 that the Toeplitz and Hankel theories are closely intertwined. The Berezin transform is known to be a powerful tool in studying Toepliz operators. The Berezin transform cannot be defined for Hankel operators because they map one space into a different space. However, the fact that they map the space into its dual space leads to a natural, closely related construction; namely taking $B_f$ to the function $\langle B_f k_\zeta, \overline{k_\zeta}\rangle$, which, in contrast to the Berezin transform, is an analytic function of $\zeta$. Note that the Berezin transform is traditionally defined using normalized kernels, and we could have used normalized kernels here and gotten similar, if slightly more complicated, formulas than the ones below.

	We  present our characterization of Hilbert-Schmidt TTOs in two parts. The proof of the first goes through for any sufficiently nice reproducing kernel Hilbert space, which shows a more general fact about such spaces: the Hilbert-Schmidt norm of a bilinear form of Hankel type is $\langle f , Tf\rangle$, where $Tf=\langle f, k^2_\zeta\rangle$.

\begin{theorem}
Suppose $B_{\bar{f}}$ is a truncated Hankel operator with conjugate analytic
symbol, and set ($Tf)(\zeta)=\left\langle f,k_{\zeta}^{2}\right\rangle .$ Then
$B_{\bar{f}}\in S_{2}$ if and only if $\left\langle f,Tf\right\rangle $ is
finite. In fact%
\[
\left\Vert B_{\bar{f}}\right\Vert _{S_{2}}^{2}=\left\langle f,Tf\right\rangle
.
\]

\end{theorem}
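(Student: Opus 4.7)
My plan is to compute $\|B_{\bar f}\|_{S_2}^2$ directly via an orthonormal basis expansion, convert it to a double integral through the reproducing kernel of $K_u$, and match that double integral to the definition of $\langle f,Tf\rangle$.

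First I would pick an orthonormal basis $\{e_i\}$ of $K_u$; then $\{\bar e_j\}$ is an orthonormal system in $\overline{K_u}$ that contains an orthonormal basis for the (closure of the) range of $B_{\bar f}$, so that $\|B_{\bar f}\|_{S_2}^2=\sum_{i,j}|\langle B_{\bar f}e_i,\bar e_j\rangle|^2$. The key matrix-element computation is
\[
\langle B_{\bar f}e_i,\bar e_j\rangle
=\langle P_{\overline u}(\bar f\,e_i),\bar e_j\rangle
=\langle \bar f\,e_i,\bar e_j\rangle
=\int_{\partial\mathbb D}\bar f\,e_i e_j\,dm,
\]
where $P_{\overline u}$ drops out because $\bar e_j\in\overline{K_u}$. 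Notice the symmetry in $i,j$, which reflects the Hankel character of $B_{\bar f}$.

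Next, writing $|\cdot|^2$ as a product of two integrals (over variables $\zeta$ and $\eta$) and interchanging the sum with the integration by Tonelli, I would pass the sum inside and apply the reproducing kernel identity $\sum_i e_i(\zeta)\overline{e_i(\eta)}=k_\eta(\zeta)$ twice, one factor coming from the $e_i$ sum and one from the $e_j$ sum. The result is the double-integral expression
\[
\|B_{\bar f}\|_{S_2}^2=\int\!\!\int \bar f(\zeta)f(\eta)\,k_\eta(\zeta)^2\,dm(\zeta)\,dm(\eta).
\]
Finally I would compare this to $\langle f,Tf\rangle$: from $(Tf)(\eta)=\langle f,k_\eta^2\rangle=\int f(\zeta)\overline{k_\eta(\zeta)}^2\,dm(\zeta)$ one gets $\overline{(Tf)(\eta)}=\int\bar f(\zeta)k_\eta(\zeta)^2\,dm(\zeta)$, and integrating against $f(\eta)$ in the outer variable reproduces exactly the same double integral. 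This establishes the identity, and both directions of the equivalence follow because the integrals involved are nonnegative after taking moduli, so finiteness of one side forces finiteness of the other.

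The main obstacle is a technical one. The matrix-element formula pairs $\bar f$ (in $\overline{H^2}$) against the product $e_i e_j$, which is only guaranteed to lie in $H^1$, so the Fubini/Tonelli step and the definition of the pairing $\langle h,f\rangle$ for $h\in H^1$ both need care. I would handle this by first establishing the identity on a dense class of symbols (for instance $f\in K_u\cap H^\infty$, so that $\bar f\,e_i e_j\in L^1$ unambiguously and all pointwise kernel manipulations are legitimate), using that all integrands are nonnegative to avoid any conditional-convergence issues, and then extending by monotonicity to general $f\in K_u$, where the two sides are simultaneously finite or infinite.
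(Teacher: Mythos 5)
Your starting point is sound and is, in coordinates, the same quantity the paper computes: $\|B_{\bar f}\|_{S_2}^2=\sum_{i,j}|\langle B_{\bar f}e_i,\bar e_j\rangle|^2$ with $\langle B_{\bar f}e_i,\bar e_j\rangle=\langle e_ie_j,f\rangle$, i.e.\ the Hilbert--Schmidt norm of the Hankel-type bilinear form $(\alpha,\beta)\mapsto\langle\alpha\beta,f\rangle$. But the step that carries all the analytic weight --- interchanging the sum over $i,j$ with the boundary integrals and identifying the result with $\langle f,Tf\rangle$ --- is not justified, and the justifications you offer do not work. First, Tonelli does not apply: the terms $|\langle e_ie_j,f\rangle|^2$ are nonnegative, but once each is expanded as a double integral the integrand $\bar f(\zeta)f(\eta)e_i(\zeta)\overline{e_i(\eta)}e_j(\zeta)\overline{e_j(\eta)}$ is genuinely complex-valued, so "all integrands are nonnegative" is false and the sum cannot be pushed inside the integral on those grounds. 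Second, the kernel identity $\sum_i e_i(\zeta)\overline{e_i(\eta)}=k_\eta(\zeta)$ is valid for points of the disk; on the boundary, where your integrals live, the series need not converge, and the limiting object $\int\!\!\int\bar f(\zeta)f(\eta)k_\eta(\zeta)^2\,dm(\zeta)dm(\eta)$ is not absolutely convergent in general: for $\eta\in\partial\mathbb D$ in the spectrum of $u$ one has $k_\eta\notin L^2$, and $|k_\eta(\zeta)|^2\sim|1-\bar\eta\zeta|^{-2}$ near such diagonal points. So your displayed formula is only formal, and relating it to $\langle f,Tf\rangle$, where $Tf(\zeta)=\langle f,k_\zeta^2\rangle$ is defined at interior points $\zeta$, requires a regularization/limiting argument (e.g.\ moving one variable to $r\eta$, $r\to1$) that is not supplied. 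Third, "extend by monotonicity" has no content here: both $f\mapsto\|B_{\bar f}\|_{S_2}^2$ and $f\mapsto\langle f,Tf\rangle$ are quadratic, not monotone in any ordering of symbols, so passing from $f\in K_u\cap H^\infty$ to general $f$ needs a genuine two-sided approximation argument; likewise the claim that "finiteness of one side forces finiteness of the other" because "the integrals are nonnegative after taking moduli" is circular, since taking moduli destroys the identity (the absolute integral can be infinite while both sides are finite).

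For comparison, the paper's proof is designed precisely to avoid these boundary-convergence issues: it views the form as a functional on $K_u\otimes K_u$, uses Peetre's observation that Hankel-type forms are orthogonal to the subspace $V_D$ of tensors vanishing on the diagonal, invokes Aronszajn's theorem identifying $V_D^\perp$ with the reproducing kernel space $H(k_\zeta^2)$ on the disk, and then converts the $H(k_\zeta^2)$ norm of the diagonal restriction into $\langle f,Tf\rangle$ via the reproducing property --- all computations take place with kernels at interior points, where $k_\zeta^2$ is a bona fide bounded function. Your ONB computation could likely be repaired (for instance by keeping one variable inside the disk throughout, or by writing $\sum_j|\langle e_ie_j,f\rangle|^2=\|P_{K_u}(\bar e_i f)\|^2$ for bounded $f$ and then controlling the limit), but as written the central interchange and the passage to general symbols are genuine gaps rather than routine technicalities.
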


\begin{proof}
The map $\left(  \alpha,\beta\right)  \rightarrow\left\langle B_{\bar{f}%
}\alpha,\bar{\beta}\right\rangle =\left\langle \alpha\beta,f\right\rangle $
defines a bilinear functional on $K_{u}$ and hence a linear functional $L$ on
the algebraic tensor product $K_{u}\otimes_{\text{alg}}K_{u}.$ Having
$B_{\bar{f}}\in S_{2}$ is equivalent to knowing that the functional extends
continuously to the Hilbert space tensor product $K_{u}\otimes K_{u}.$ As such
it is of the form $L(\alpha\otimes\beta)=\left\langle \alpha\otimes
\beta,\mathfrak{B}_{f}\right\rangle $ for some $\mathfrak{B}_{f}\in
K_{u}\otimes K_{u}.$ The functional $L$ is ``of Hankel type," that is, for any
$\alpha,\beta,$ $L(\alpha\otimes\beta)$ is a linear functional of the
pointwise product $\alpha\beta.$ Hence, by Proposition 2.1 of \cite{Peetre1995}, $L\perp
V_{D},$ where $V_{D}$ is the subspace of $K_{u}\otimes K_{u}$ of functions
which vanish on the diagonal; $N\in V_{D}$ exactly if $N(k_{\zeta},k_{\zeta
})=0$ for all $\zeta$ in the disk. Hence the norm of $L$ equals the norm of
its image in the quotient space $V_{D}^{\perp}.$ 

The space $V_{D}^{\perp}$ is described by a classical result of Aronszajn
(Theorem II in Section 8 of \cite{Aronszajn1950}). He showed that $V_{D}^{\perp}$ is (isometrically isomorphic
as a reproducing kernel Hilbert space to) $H\left(  k_{\zeta}^{2}\right),$ 
 the reproducing kernel Hilbert space of functions on the unit disk generated
by the kernel functions $\left\{  k_{\zeta}^{2}\right\}  .$ Using this
identification, we see that the norm of $\mathfrak{B}_{f}$ in the tensor
product, and hence the norm of the operator in $S_{2},$ is the norm of the
restriction of $\mathfrak{B}_{f}$ to the diagonal; that is, the norm in
${H\left(  k_{\zeta}^{2}\right)} $ of $\left\langle \mathfrak{B}_{f},k_{\zeta
}\otimes k_{\zeta}\right\rangle =(Tf)(\zeta)$. Summarizing,
\[
\left\Vert B_{\bar{f}}\right\Vert _{S_{2}}^{2}=\left\langle Tf,Tf\right\rangle
_{H(k_{\zeta}^{2})}.
\]

This gives the norm of $B_{\bar{f}},$ but we would prefer an answer that does
not involve the inner product in $H\left(k_{\zeta}^{2}\right)$, about which we know very
little. To accomplish that and complete the proof we show that for all
$\alpha,\gamma\in K_{u}$ we have $\left\langle \alpha,\mathcal{\gamma
}\right\rangle _{K_{u}}=\left\langle T\alpha,\mathcal{\gamma}\right\rangle
_{H\left(k_{\zeta}^{2}\right)}.$ By linearity it suffices to consider the case of
$\alpha=k_{w}$ a reproducing kernel, because linear combinations of such kernels are dense. We first compute
\[
Tk_{w}(\zeta)=\left\langle k_{w},k_{\zeta}^{2}\right\rangle =\overline
{k_{\zeta}^{2}(w)}=k_{w}^{2}(\zeta).
\]
Hence
\[
\left\langle Tk_{w},\mathcal{\gamma}\right\rangle _{H(k_{\zeta}^{2}%
)}=\left\langle k_{w}^{2}(\zeta),\mathcal{\gamma}\right\rangle _{H(k_{\zeta
}^{2})}=\overline{\left\langle \mathcal{\gamma},k_{w}^{2}(\zeta)\right\rangle
_{H(k_{\zeta}^{2})}}=\overline{\gamma(w)}.%
\]
On the other hand
\[
\left\langle k_{w},\mathcal{\gamma}\right\rangle _{K_{u}}=\overline
{\left\langle \mathcal{\gamma},k_{w}\right\rangle _{K_{u}}}=\overline
{\gamma(w)}.%
\]

\end{proof}

We have reduced the problem to determining when $\langle f, Tf \rangle$ is finite. The next theorem computes $Tf$ for model spaces, completing our characterization of those THOs. We write $f=f_1 + uf_2$ with $f_i\in K_u$.

\begin{theorem}
We have
$$Tf(w)=\langle f, k^2_{u,w} \rangle = (zf)'(w) -2u(w)(zf_2)'(w).$$
\end{theorem}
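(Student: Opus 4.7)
The plan is a direct computation, exploiting the explicit formula for $k_{u,w}$ together with the standard identity relating $(1-\bar wz)^{-2}$ to differentiation in $H^2$.

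First I would expand the square of the reproducing kernel using $k_{u,w}(z) = (1 - \overline{u(w)}u(z))/(1-\bar w z)$ to write
\[
k_{u,w}^{2}(z) = \frac{1}{(1-\bar w z)^{2}} \;-\; 2\,\overline{u(w)}\,\frac{u(z)}{(1-\bar w z)^{2}} \;+\; \overline{u(w)}^{\,2}\,\frac{u(z)^{2}}{(1-\bar w z)^{2}},
\]
which splits $\langle f, k_{u,w}^{2}\rangle_{H^{2}}$ into three pieces. The antilinearity of the inner product in the second slot turns the scalars $\overline{u(w)}^{j}$ into $u(w)^{j}$ outside the resulting inner products.

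Next I would record the core identity: for any $g\in H^{2}$ with Taylor coefficients $a_{n}$, the expansion $(1-\bar w z)^{-2}=\sum_{n\ge 0}(n+1)\bar w^{\,n}z^{n}$ gives
\[
\bigl\langle g,\,(1-\bar w z)^{-2}\bigr\rangle_{H^{2}} \;=\; \sum_{n\ge 0}(n+1)a_{n}w^{n}\;=\;(zg)'(w).
\]
Applied with $g=f$, this produces the first term $(zf)'(w)$.

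For the remaining two pieces I would use the decomposition $f=f_{1}+uf_{2}$ with $f_{1},f_{2}\in K_{u}$, together with the fact that $K_{u}\perp uH^{2}$ and that $|u|=1$ on $\partial\mathbb D$. In the middle term, $\langle f_{1},u(1-\bar w z)^{-2}\rangle=0$ because the right factor lies in $uH^{2}$, while $\langle uf_{2},u(1-\bar w z)^{-2}\rangle=\langle f_{2},(1-\bar w z)^{-2}\rangle=(zf_{2})'(w)$; including the prefactor $-2u(w)$ yields the second term in the stated formula. In the last term, both $\langle f_{1},u^{2}(1-\bar w z)^{-2}\rangle$ and $\langle uf_{2},u^{2}(1-\bar w z)^{-2}\rangle=\langle f_{2},u(1-\bar w z)^{-2}\rangle$ vanish for the same orthogonality reason, eliminating this contribution entirely.

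There isn't really a hard step; the only thing that requires care is bookkeeping the complex conjugations coming from the antilinearity of $\langle\cdot,\cdot\rangle$ in the second slot and confirming that the orthogonality $K_{u}\perp uH^{2}$ kills exactly the right terms so that only the two surviving pieces combine into $(zf)'(w)-2u(w)(zf_{2})'(w)$.
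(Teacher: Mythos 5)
Your proposal is correct and follows essentially the same route as the paper: expand $k_{u,w}^{2}$ into the three terms, discard the $u^{2}$ term by orthogonality to $K_{u^{2}}$ (equivalently, to $f_{1}+uf_{2}$), and evaluate the remaining two inner products via $\langle g,(1-\bar w z)^{-2}\rangle=(zg)'(w)$ together with the decomposition $f=f_{1}+uf_{2}$ and unimodularity of $u$ on the boundary. Your write-up is in fact a bit more explicit than the paper's (you justify the derivative identity and each orthogonality step), but the argument is the same.
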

\begin{proof}
Recall $Tf(w)=\langle f, k^2_{u,w} \rangle$. We have 
$$k^2_{u,w}(z)=\left( \frac{1-\overline u(w) u(z)}{1-\overline w z}\right)^2 = \frac{1}{(1-\overline w z)^2}  -2 \frac{\overline u(w)u(z)}{(1-\overline w z)^2} +\frac{\overline u(w)^2u(z)^2}{(1-\overline w z)^2}.$$
The third term is orthogonal to $K_{u^2}$, so we may ignore it. The inner product of $f$ against the first term is 
$$\left\langle f, \frac{1}{(1-\overline w z)^2 } \right\rangle = (wf)'.$$
The inner product against the second term is 
$$-2 \left\langle f,  \frac{\overline u(w)u(z)}{(1-\overline w z)^2} \right\rangle  = -2 \left\langle  \overline u(z)(f_1+u(z)f_2),  \frac{\overline u(w)}{(1-\overline w z)^2} \right\rangle$$ $$=-2u(w)\left\langle b_2, \frac{1}{(1-\overline w z)^2}\right\rangle = -2u(w)(wf_2)'.$$
The total is 
$$Tf(w)=\langle f, k^2_{u,w} \rangle = (zf)'(w) -2u(w)(zf_2)'(w).$$
\end{proof}
\begin{remark}
When $f=f_1$ or $u=0$, the condition that $\langle f, Tf \rangle $ is finite is exactly the usual condition for a Hankel operator on the Hardy space to lie in $S_2$: $f$ must lie in the Dirichlet space. In fact, this is a restatement of the observation behind the proof of Theorem 1; if a TTO has an analytic symbol, then it can be understood using the classical theory of Hankel operators on the Hardy space. Note also that computing $T$ for the Hardy and Fock spaces and applying the previous theorem recovers the classical results for these spaces.
\end{remark}

These results suffice to prove a complete characterization of the Hilbert-Schmidt TTOs, since by Lemmas $1$ and $2$, any TTO $A_{\overline{\psi} + \phi}$ may be written as the truncated Hankel operator $B_{\overline{C\phi +\nu u}}$ and Theorem $8$ may be applied. 

\bibliography{SchattenTTOs}{}
\bibliographystyle{plain}
\end{document}